\documentclass{aims}
\usepackage{amsmath}
  \usepackage{paralist}
 \usepackage[colorlinks=true]{hyperref}
\hypersetup{urlcolor=blue, citecolor=red}

\usepackage{amsmath}
\usepackage{amsthm}
\usepackage{amssymb}
\usepackage{amscd}
\usepackage{latexsym}
\usepackage{amsfonts}
\usepackage[novbox]{pdfsync}

\input xy
\xyoption{all} \tolerance=500

  \textheight=8.2 true in
   \textwidth=5.0 true in
    \topmargin 30pt
     \setcounter{page}{1}



\newtheorem{theorem}{Theorem}[section]

\newtheorem{proposition}{Proposition}

\theoremstyle{definition}

\newtheorem{remark}{Remark}

\newtheorem{example}{Example}

\font\black=cmbx10 \font\sblack=cmbx7 \font\ssblack=cmbx5 \font\blackital=cmmib10  \skewchar\blackital='177
\font\sblackital=cmmib7 \skewchar\sblackital='177 \font\ssblackital=cmmib5 \skewchar\ssblackital='177
\font\sanss=cmss10 \font\ssanss=cmss9 
\font\sssanss=cmss8 scaled 600
\font\blackboard=msbm10 \font\sblackboard=msbm7 \font\ssblackboard=msbm5
\font\caligr=eusm10 \font\scaligr=eusm7 \font\sscaligr=eusm5  \font\fraktur=eufm10
\font\sfraktur=eufm7 \font\ssfraktur=eufm5 

\font\bsymb=cmsy10 scaled\magstep2
\def\all#1{\setbox0=\hbox{\lower1.5pt\hbox{\bsymb
       \char"38}}\setbox1=\hbox{$_{#1}$} \box0\lower2pt\box1\;}
\def\exi#1{\setbox0=\hbox{\lower1.5pt\hbox{\bsymb \char"39}}
       \setbox1=\hbox{$_{#1}$} \box0\lower2pt\box1\;}

\def\tx#1{{\fam0\relax#1}}

\newfam\bifam
\textfont\bifam=\blackital \scriptfont\bifam=\sblackital \scriptscriptfont\bifam=\ssblackital

\newfam\blfam
\textfont\blfam=\black \scriptfont\blfam=\sblack \scriptscriptfont\blfam=\ssblack

\newfam\bbfam
\textfont\bbfam=\blackboard \scriptfont\bbfam=\sblackboard \scriptscriptfont\bbfam=\ssblackboard

\newfam\ssfam
\textfont\ssfam=\sanss \scriptfont\ssfam=\ssanss \scriptscriptfont\ssfam=\sssanss
\def\sss#1{{\fam\ssfam\relax#1}}

\newfam\clfam
\textfont\clfam=\caligr \scriptfont\clfam=\scaligr \scriptscriptfont\clfam=\sscaligr

\newfam\frfam
\textfont\frfam=\fraktur \scriptfont\frfam=\sfraktur \scriptscriptfont\frfam=\ssfraktur

\def\hpb#1{\setbox0=\hbox{${#1}$}
    \copy0 \kern-\wd0 \kern.2pt \box0}
\def\vpb#1{\setbox0=\hbox{${#1}$}
    \copy0 \kern-\wd0 \raise.08pt \box0}

\def\pmb#1{\setbox0\hbox{${#1}$} \copy0 \kern-\wd0 \kern.2pt \box0}
\def\pmbb#1{\setbox0\hbox{${#1}$} \copy0 \kern-\wd0
      \kern.2pt \copy0 \kern-\wd0 \kern.2pt \box0}
\def\pmbbb#1{\setbox0\hbox{${#1}$} \copy0 \kern-\wd0
      \kern.2pt \copy0 \kern-\wd0 \kern.2pt
    \copy0 \kern-\wd0 \kern.2pt \box0}
\def\pmxb#1{\setbox0\hbox{${#1}$} \copy0 \kern-\wd0
      \kern.2pt \copy0 \kern-\wd0 \kern.2pt
      \copy0 \kern-\wd0 \kern.2pt \copy0 \kern-\wd0 \kern.2pt \box0}
\def\pmxbb#1{\setbox0\hbox{${#1}$} \copy0 \kern-\wd0 \kern.2pt
      \copy0 \kern-\wd0 \kern.2pt
      \copy0 \kern-\wd0 \kern.2pt \copy0 \kern-\wd0 \kern.2pt
      \copy0 \kern-\wd0 \kern.2pt \box0}


\mathchardef\za="710B  
\mathchardef\zb="710C  
\mathchardef\zg="710D  
\mathchardef\zd="710E  
\mathchardef\zve="710F 
\mathchardef\zz="7110  
\mathchardef\zh="7111  
\mathchardef\zvy="7112 
\mathchardef\zi="7113  
\mathchardef\zk="7114  
\mathchardef\zl="7115  
\mathchardef\zm="7116  
\mathchardef\zn="7117  
\mathchardef\zx="7118  
\mathchardef\zp="7119  
\mathchardef\zr="711A  
\mathchardef\zs="711B  
\mathchardef\zt="711C  
\mathchardef\zu="711D  
\mathchardef\zvf="711E 
\mathchardef\zq="711F  
\mathchardef\zc="7120  
\mathchardef\zw="7121  
\mathchardef\ze="7122  
\mathchardef\zy="7123  
\mathchardef\zf="7124  
\mathchardef\zvr="7125 
\mathchardef\zvs="7126 
\mathchardef\zf="7127  
\mathchardef\zG="7000  
\mathchardef\zD="7001  
\mathchardef\zY="7002  
\mathchardef\zL="7003  
\mathchardef\zX="7004  
\mathchardef\zP="7005  
\mathchardef\zS="7006  
\mathchardef\zU="7007  
\mathchardef\zF="7008  
\mathchardef\zW="700A  
\mathchardef\zC="7009  

\newcommand{\be}{\begin{equation}}
\newcommand{\ee}{\end{equation}}

\newcommand{\bea}{\begin{eqnarray}}
\newcommand{\eea}{\end{eqnarray}}
\newcommand{\beas}{\begin{eqnarray*}}
\newcommand{\eeas}{\end{eqnarray*}}
\def\*{{\textstyle *}}
\newcommand{\R}{{\mathbb R}}

\def\ssT{\sss T}

\newcommand{\we}{\wedge}
\newcommand{\nn}{\nonumber}

\newcommand{\s}{{\textstyle *}}

\newcommand{\pa}{\partial}
\newcommand{\ti}{\times}

\def\ix{\operatorname{i}}

\def\cC{\mathcal{C}}

\def\cL{\mathcal{L}}

\def\Z{\mathbf{Z}}



\def\sT{{\sss T}}

\def\sV{{\sss V}}

\def\xd{\tx{d}\,}
\def\xi{\tx{i}}

\def\dt{\xd_{\textsf{T}}}

\newdir{ (}{{}*!/-5pt/@^{(}}

\def\eza{\za}
\def\ezb{\zb}

\def\ezk{\zk}

\def\ezp{\zp}

\def\ezt{\zt}
\def\ezw{\zw}

\def\ezvy{\zvy}

\def\ix{\operatorname{i}}
\def\xit{\ix_{\textsf{T}}}
\def\xd{\operatorname{d}\!}

\def\dt{\xd_{\textsf{T}}}
\def\bdt{\bar\xd_{\,\textsf{T}}}
\def\s*{{\scriptstyle *}}


\def\geqs{\geqslant}



\title[Dynamics of strings] 
      {Geometry of Lagrangian and Hamiltonian formalisms\\ in the dynamics of strings}

\author[ Janusz Grabowski, Katarzyna Grabowska, Pawe\l\ Urba\'nski]{}

\subjclass{Primary: 70S05, 5730; Secondary: 53D05, 58A20, 58A32.}
 \keywords{Tulczyjew triples, Lagrange formalism, Hamiltonian formalism,
variational calculus, double vector bundles, minimal surfaces.}

 \email{jagrab@impan.pl}
 \email{konieczn@fuw.edu.pl}
 \email{urbanski@fuw.edu.pl}

\thanks{Research  founded by the  Polish National Science Centre grant under the contract number DEC-2012/06/A/ST1/00256.}

\begin{document}
\maketitle

\centerline{\scshape Janusz Grabowski }
\medskip
{\footnotesize
 \centerline{Institute of Mathematics, Polish Academy of Sciences,}
   \centerline{ \'Sniadeckich 8, 00-656 Warszawa, POLAND}
} 

\medskip

\centerline{Katarzyna Grabowska and Pawe\l\  Urba\'nski}
\medskip
{\footnotesize
 \centerline{ Division of Mathematical Methods in Physics,}
   \centerline{University of Warsaw, Ho\.za 74, 00-682 Warszawa, POLAND}
}

\bigskip

 \centerline{(Communicated by the associate editor name)}

\begin{abstract}
{The Lagrangian description of mechanical systems and the Legendre Transformation
(considered as a passage from the Lagrangian to the Hamiltonian
formulation of the dynamics) for point-like objects, for which the
infinitesimal configuration space is $\textsf{T} M$, is based on the existence of
canonical symplectic isomorphisms of double vector bundles $\textsf{T}^\*\textsf{T}M$, $\textsf{T}^\*\textsf{T}^\* M$,
and $\textsf{T}\textsf{T}^\* M$, where the symplectic structure on $\textsf{T}\textsf{T}^\* M$ is the tangent lift of the canonical symplectic structure $\textsf{T}^\* M$.
We show that there exists an analogous picture in the dynamics of objects for which the configuration space is $\wedge^n \textsf{T} M$, if we make use of certain structures of graded bundles of degree $n$, i.e. objects generalizing vector bundles (for which $n=1$). For instance, the role of $\textsf{T}\textsf{T}^\*M$ is played in our approach by the manifold $\we^n\textsf{T} M\we^n\textsf{T}^\*M$, which is canonically a graded bundle of degree $n$ over $\we^n\textsf{T} M$. Dynamics of strings and the Plateau problem in statics are particular cases of this framework.}
\end{abstract}

\section{Introduction}\label{s:1}
This work is an account of a research undertaken jointly with W.~M.~Tulczyjew on the Legendre transformation in the dynamics of strings.
{Let us recall} structures which are involved in the Legendre transformation for the dynamics of  point-like objects, i.e. when the motion is given by a one-dimensional submanifold (parametrized or not) in the configuration manifold $M$. An infinitesimal piece of motion is a tangent vector and a system is described by a Lagrangian, i.e. a function (constrained or not)  on $\sT M$.  The geometric content of Lagrangian and Hamiltonian formulation of the dynamics is contained in the following diagram.
    \begin{equation}\label{F3.1}
    \xymatrix@R-3mm @C-7mm
        { & \sT^\*\sT^\*M \ar[ldd]_*{} \ar[rd]^*{} & & & \sT \sT^\* M \ar[rrr]^*{{\eza}_M}
        \ar[lll]_*{\ezb_M} \ar[ldd]^*{} \ar[rd]^*{}& & & \sT^\*\sT M \ar[ldd]^*{} \ar[rd]^*{} & \cr
        & & \sT M \ar[ldd]^*{} & & & \sT M  \ar[ldd]^*{} \ar[lll]^*{} \ar[rrr]^*{} & & & \sT M \ar[ldd]^*{}  \cr
         \sT^\* M  \ar[rd]^*{}  & & & \sT^\* M \ar[rrr]^*{} \ar[lll]^*{} \ar[rd]^*{}& & & \sT^\* M
        \ar[rd]^*{} & & \cr
        & M  & & &  M \ar[rrr]^*{} \ar[lll]^*{} & & & M & }.
\end{equation}
The dynamics is a subset of $\sT\sT^\* M$, which can be transported either to $\sT^\*\sT M$  (Lagrangian side) or to $\sT^\*\sT^\* M$ (Hamiltonian side). The top elements in the diagram are certain canonical {\it double vector bundles} over $\sT M$ and $\sT^\* M$. A double vector bundle is, roughly speaking, a manifold with two compatible vector bundle structures, i.e. two vector bundle structures whose Euler vector fields commute (see Section (\ref{s:4})).

The mappings $\ezb_M$ and $\eza_M$ are double vector bundle isomorphisms. Beside being double vector bundles, each of the manifolds: $\sT^\*\sT^\* M, \ \sT^\*\sT M$, and   $\sT\sT^\* M$ is, in a canonical way,  a symplectic manifold. The bundles $\sT^\*\sT^\* M, \ \sT^\*\sT M$ carry the canonical symplectic structures of cotangent bundles and $\sT\sT^\* M$ is equipped with the tangent lift $\xd_\sT \ezw_M$ of the canonical symplectic form $\ezw_M$ on $\sT^\* M$. With these symplectic structures, $\ezb_M$ and $\eza_M$ are symplectomorphisms  (cf.\cite{Tu1,Tu6,Tu3}). The mapping $\zb_M$ is given by the contraction of the canonical symplectic form on $\sT^\* M$. The mapping $\eza_M$ can be defined as the composition of $\ezb_M$ with the canonical isomorphism of $\sT^\* \sT M$ and $\sT^\* \sT^\* M$. This is a particular case of a canonical isomorphism  $\sT^\* E\simeq\sT^\* E^\*$ for any vector bundle $E$. However, it is more appropriate to introduce $\eza_M$ as the map dual to the canonical flip $\ezk_M\colon\sT\sT M \rightarrow \sT\sT M$, which interchanges the canonical and the tangent projections onto $\sT M$. The dual to the bundle $\ezt_{\textsf{T} M}\colon \sT\sT M\rightarrow \sT M$ is of course  $\ezp_{\textsf{T} M}\colon \sT^\*\sT M\rightarrow \sT M$ and the dual to $\sT\ezt_{ M}\colon \sT\sT M\rightarrow \sT M$ can be identified with $\sT{\zp_M}\colon \sT\sT^\* M\rightarrow \sT M$. The pairing related to this identification is the complete lift
    \begin{equation}\label{F3.2}
        \xd_\textsf{T} \zd_M \colon \sT(\sT M \times_M \sT^\* M) = \sT\sT M \times_{\sT M} \sT\sT^\* M \rightarrow \R
    \end{equation}
of the canonical pairing  $ \zd_M \colon \sT M \times_M \sT^\* M \rightarrow \R$.
For more details see \cite{KU}, \cite{LM}, \cite{PT}, \cite{TU}, and \cite{Ur}. Similar constructions in the context of field theory one can find e.g. in \cite{G1,G2,GG2}, while the algebroid version is discussed in \cite{GG,GG1,GGU2}.

\medskip
In the following, we shall give similar constructions, motivated by the study of dynamics of one dimensional, non-parametrized  objects (called frequently \emph{strings}), in which the bundle $\we^2\sT M$ of tangent bivectors replaces $\sT M$. The `time-evolution' of a string adds one dimension, so the motion of a system  is given by a two-dimensional submanifold in the manifold $M$. An infinitesimal piece of the motion is the jet of the submanifold which for first-order theories it is the first jet. We immediately realize that, for this choice of infinitesimal motions, the infinitesimal action (Lagrangian) is not a function on first jets, but a section of certain line bundle over first jets manifold, the dual bundle of the bundle of "first jets with volumes". This leads to essential complications even in one dimensional case (relativistic particle), see \cite{Lu} and an alternative approach in \cite{S}.

\medskip
A compromise is to take for the space of infinitesimal pieces of motions the space of simple non-zero 2-vectors. Such 2-vectors represent first jets of 2-dimensional submanifolds (being 2-dimensional subspaces in tangent spaces) together with a volume which should be taken so that it takes the value 1 on the 2-vector. It is convenient to extend this space to all 2-vectors, i.e. to  the vector bundle $\wedge^2\sT M$. In this setting, a Lagrangian is a positive homogeneous function on $\wedge^2\sT M$ (thus the action functional does not depend on the parametrization of the surface) and the corresponding Hamiltonian (if it exists) is a function on the dual vector bundle  $\wedge^2\sT^\* M$.  The dynamics should be an equation (possibly implicit) for 2-dimensional submanifolds in the phase space, i.e. a subset $D$ in $\wedge^2 \sT \wedge^2\sT^\* M$. A surface $S$ in the phase space $\wedge^2\sT^\* M$ is a solution for the phase dynamics if and only if its tangent space at $\zw\in\wedge^2\sT^\* M$ is represented by a bivector from $D_\zw$. If we use a parametrization, then the tangent bivectors associated with this parametrization must belong to $D$.

\medskip
We realize immediately, that some of the tools, suitable for point-like objects, here  are not adequate. On one hand, $\sT^\*\wedge^2\sT^\* M$ and   $\sT^\*\wedge^2\sT M$ are symplectic manifolds with double vector bundle structures, but it is not true for  $\wedge^2 \sT\wedge^2\sT^\* M$. In general, for a vector bundle $\zt \colon E\rightarrow N$, the fibration $\wedge^2 \sT \zt\colon \wedge^2 \sT E \rightarrow \wedge^2 \sT N $ is not a vector bundle. Moreover, we do not have an identification of  $\wedge^2\sT (E\times_N F)$ with $\wedge^2\sT E \times_{\wedge^2\sT N}\wedge^2\sT F$  and the total derivative $\xd_\textsf{T}$ seems to be useless. Stressing once more: an evolution of a string in $M$ is a surface in $M$ and the (implicit) phase dynamics of the string is represented by a collection of `infinitesimal surfaces' in the phase space, i.e. by a subset of $\wedge^2 \sT \wedge^2\sT^\* M$.

To obtain the dynamics, we use the canonical multisymplectic 3-form $\ezw^2_M$ on $\wedge^2\sT^\* M$, which gives, by the contraction, canonical morphism $\ezb^2_M \colon \wedge^2 \sT\wedge^2\sT^\* M \rightarrow \sT^\*\wedge^2\sT^\* M $. We can replace the total derivative $\xd_\textsf{T}$ by its "higher analogue" $\xd_\textsf{T}^2$, and the bundle $\wedge^2\sT \wedge^2\sT^\* M$ by its quotient bundle {\it modulo} its subbundle $\sV^2 \wedge^2\sT^\* M$ of 2-vertical vectors. These are the ideas we owe to W.~M.~Tulczyjew. What is more, the bundle $\wedge^2 \sT\wedge^2\sT^\* M$ can be viewed as \emph{double graded bundle} in the sense of Grabowski and Rotkiewicz \cite{GR2}, so we get a picture completely analogous to (\ref{F3.1}) except for the fact that the morphisms are not isomorphisms. In this sense, our geometric approach to classical strings is based on morphisms of double graded bundles, although the canonical multisymplectic structure is behind. Let us note also that the graded bundles appear in the supergeometric context under the name {\it graded manifolds} or {\it N-manifolds} \cite{Roy, Sev, Vor}, in particular in the Roytenberg's picture for {\it Courant algebroids} \cite{Roy}.

We want to stress that our framework is completely covariant and, what is more, not reduced to derivation of the Euler-Lagrange equations.
We present the full picture, determining the phase space and the phase equations as the meeting point of the Lagrange and the Hamilton formalisms, subject to the corresponding \emph{Legendre transformation}. The equations are obtained purely geometrically by means of the morphisms in the triple from the lagrangian submanifolds generated by Lagrangians or Hamiltonians. In particular, on the Hamiltonian side we do not use, at least explicitly, any Poisson brackets.

Note finally that classical field theory is usually associated with the concept of a multisymplectic structure.  The multisymplectic approach appeared first in the papers of the `Polish school' \cite{Ga,KS,KTu,Tu5}. Then, it was developed by Gotay, Isennberg, Marsden, and others in \cite{GIMa,GIMb}. The original idea of the multisymplectic structure has been thoroughly investigated and developed by many contemporary authors, see e.g. \cite{CIL,CCI1,EM,FP1,FP2}. The Tulczyjew triple in the context of multisymplectic field theories appeared recently in \cite{CGM} and \cite{LMS} (see also \cite{V}). A similar picture, however with differences on the Hamiltonian side, one can find in \cite{GM} (see also \cite{GMS,Kr}) and many others; it is not possible to list all achievements in this area.

As we use the canonical multisymplectic form on $\we^n\sT^\ast M$, the prototype of all the abstract definitions of such structure, our paper can be viewed also as an attempt to start again the discussion about what is the reasonable concept of an abstract multisymplectic structure (cf. \cite{CIL, FG, Gun, Ma}). We want to undertake this discussion in a separate paper.

\section{Notation and local coordinates}\label{s:2}
Let $M$ be a smooth manifold. We denote by $\ezt_M\colon \sT M\rightarrow M$ the tangent vector bundle and by $\ezp_M\colon \sT^\* M\rightarrow M$ the cotangent vector bundle. Let $\zt\colon E \rightarrow M$ be a vector bundle and $\zp \colon E^\* \rightarrow M$ the dual bundle. We use the following notations for tensor bundles:
    \begin{align*}
        \zt^{\otimes k}&\colon E\otimes_M \cdots \otimes _M E  = \otimes^k_M(E) \longrightarrow M , \\
        \zt^{\wedge  k}&\colon E\wedge_M \cdots \wedge_M E = \wedge^{k}_M (E) \longrightarrow M ,
    \end{align*}
the modules of sections over $\cC^\infty(M)$:
    \be
        \otimes ^k(\zt)= \zG(\otimes ^k_M(E))\,,  \quad
        \zF^k(\zt)= \zG(\wedge ^k_M(E))\,,
    \ee
and the corresponding tensor algebras
    \be
        \otimes (\zt)= \oplus _{k\in \Z} \otimes ^k(\zt)\,,  \quad
        \zF(\zt)=  \oplus_{k\in \Z} \zF^k(\zt)\,.
    \ee
In particular,
    $$\otimes ^0(\zt) = \zF^0(\zt)  =\cC^\infty (M)$$
and
    $$ \otimes ^k(\zt)= \zF^k(\zt)  = \{0\} \ \ \text{for $k<0$}.$$
For a differentiable mapping $\zC\colon M\rightarrow N$,
there are defined higher tangent maps
$$\otimes^k\sT\zC \colon \otimes^k\sT M \rightarrow  \otimes ^k\sT N\,,\quad
     \otimes ^k\sT\zC(v_1\we\cdots\we v_k)=\sT\zC(v_1)\otimes\cdots\otimes\sT\zC(v_k)$$
and
     $$\wedge ^k\sT\zC \colon \wedge ^k\sT M \rightarrow  \wedge ^k\sT N\,,\quad
     \wedge ^k\sT\zC(v_1\we\cdots\we v_k)=\sT\zC(v_1)\we\cdots\we\sT\zC(v_k)\,.$$
By $\sV(E)$ we denote the subbundle of tangent vertical vectors:
    $$  \sV(E) = \{v\in \sT E \colon  \sT\zt(v) =0 \} .$$
We introduce also the bundle $\sV^1(E)$ of 1-vertical bivectors
    $$ \sV^1(E) = \{u\in \wedge ^2\sT E\colon \wedge ^2\sT \ezt (u) =0  \}$$
and its subbundle of 2-vertical bivectors
    $$ \sV^1(E) \supset \sV^2(E) = \{u\in \wedge ^2\sT E\colon u \in \wedge^2_E\sV(E)\}.$$
Let $(x^\zm), \ \zm=1,\dots,n$, be a coordinate system in $M$. We introduce the induced coordinate systems
    \begin{align*}
        (x^\zm, {\dot x}^\zn) &\ \text{in} \ \sT M,\\
        (x^\zm, p_\zn) &\ \text{in} \ \sT^\* M,\\
        (x^\zm, {\dot x}^{\zn\zl}) &\ \text{in} \ \wedge ^2\sT M,\\
        (x^\zm, p_{\zn\zl})&\ \text{in} \ \wedge ^2\sT^\* M\,.
    \end{align*}
Formally, we should take only $\zn<\zm$, but we can use all pairs of indices, assuming  $p_{\zm\zn} = - p_{\zn\zm}$ and ${\dot x}^{\zm\zn} = - {\dot x}^{\zn\zm}$. With this convention, we have the following representation of bivectors and bi-covectors:
    \begin{equation}\label{F0.1}
        \frac{1}{2} {\dot x}^{\zm\zn}\frac{\partial }{\partial x^\zm}\wedge \frac{\partial }{\partial x^\zn}, \ \
        \frac{1}{2} p_{\zm\zn} \xd x^\zm\wedge \xd x^\zn.
    \end{equation}
Let $\zt\colon  E\rightarrow M$ be a vector bundle and let $(x^\zm, e^a)$ be an affine coordinate system on $E$, consisting of basic functions $x^\zm$ (constant along fibers) and functions $e^a$ linear along fibers. We introduce the adapted coordinate systems
    \begin{align*}
        (x^\zm, \zx_a), \quad &\quad \text{in} \ E^\* ,\\
        (x^\zm, e^a,{\dot x}^\zn, {\dot e}^b ) &  \quad \text{in} \ \sT E,\\
        (x^\zm, \zx_a, {\dot x}^\zn, {\dot \zx}_b) & \quad \text{in} \ \sT E^\* ,\\
        (x^\zm, e^a, p_\zn, \zp_b) & \quad \text{in}\ \sT^\*E, \\
        (x^\zm, \zx_a, p_\zn, \zf^b) & \quad \text{in}\ \sT^\* E^\* .
    \end{align*}
In $\sT\sT M$, we use the  coordinates $(x^\zm, {\dot x}^\zn, {x'}^\zl, {\dot x'}{}^\zk)$. Vectors on $\wedge ^2\sT M$ and $\wedge^2\sT^\* M$ read
    \begin{equation}\label{F0.2}\begin{split}
        \sT \wedge ^2\sT M \ni v&= {x'}^\zm \frac{\partial }{\partial x^\zm} + \frac{1}{2}{\dot x'}{}^{\zm\zn}
        \frac{\partial }{\partial {\dot x}^{\zm\zn}},    \\
        \sT \wedge ^2\sT^\* M \ni v&= {\dot x}^\zm \frac{\partial }{\partial x^\zm} + \frac{1}{2}{\dot p}_{\zm\zn}
        \frac{\partial }{\partial p_{\zm\zn}},
    \end{split}\end{equation}
where
$${\dot x'}{}^{\zm\zn} =-{\dot x'}{}^{\zn\zm}, \;
{\dot p}_{\zm\zn} = - {\dot p}_{\zn\zm}, \;
\frac{\partial }{\partial {\dot x}^{\zm\zn}} = -\frac{\partial }{\partial {\dot x}^{\zn\zm}}, \;
\frac{\partial }{\partial p_{\zm\zn}} = - \frac{\partial }{\partial  p_{\zn\zm}}.  $$
Consequently, the following representations of bivectors will be used:

    \bea\label{F0.3}
        \wedge ^2\sT E\ni u &=& \frac{1}{2} {\dot x}^{\zm\zn} \frac{\partial}{\partial x^\zm}\wedge \frac{\partial }{\partial x^\zn} + y^{\zm a}
        \frac{\partial }{\partial x^\zm}\wedge \frac{\partial }{\partial e^a}+\frac{1}{2} {\dot e}^{ab} \frac{\partial }{\partial e^a}\wedge
        \frac{\partial }{\partial e^b}\,,  \nn   \\
        \wedge ^2\sT \sT M\ni u  &=& \frac{1}{2} {\dot x}^{\zm\zn} \frac{\partial}{\partial x^\zm}\wedge \frac{\partial }{\partial x^\zn} + y^{\zm \zn}\frac{\partial }{\partial x^\zm}\wedge \frac{\partial }{\partial {\dot x}^\zn} +\frac{1}{2} {z}^{\zm\zn} \frac{\partial }{\partial {\dot x}^\zm}\wedge \frac{\partial }{\partial {\dot x}^\zn}\,,  \\ \nn
        \wedge ^2\sT \wedge^2 \sT^\* M\ni u &=&\begin{aligned}[t] \frac{1}{2} {\dot x}^{\zm\zn}\frac{\partial }{\partial x^\zm}\wedge \frac{\partial }{\partial x^\zn} + \frac{1}{2} y^{\zm}_{\zn\zl} &\frac{\partial }{\partial x^\zm}\wedge \frac{\partial }{\partial p_{\zn\zl}} +
        \frac{1}{8} {\dot p}_{\zm\zn\zl\zk} \frac{\partial }{\partial p_{\zm\zn}}\wedge\frac{\partial }{\partial p_{\zl\zk}}\,,\end{aligned}
    \eea
where
    $$y^\zm_{\zn\zl} = - y^\zm_{\zl\zn}, \ {\dot p}_{\zm\zn\zl\zk} = -{\dot p}_{\zn\zm\zl\zk} = - {\dot p}_{\zm\zn\zk\zl} = - {\dot p}_{\zl\zk\zm\zn}.$$

For each $k$, we define a canonical $k$-form $\ezvy^k_M$ on $\wedge ^k \sT^\* M$ by the formula
    $$\ezvy^k_M(v_1,\dots,v_k)= p(\sT\ezp^{\wedge k}_M v_1,\dots,\sT\ezp^{\wedge k}_M v_k),$$
where $v_i\in \sT_p \wedge ^k \sT^\* M$ and $\ezp^{\wedge k}_M \colon \wedge ^k \sT^\* M \rightarrow M$ is the canonical projection. The form $\ezvy^k_M$ is called the {\it Liouville $k$-form} and its exterior differential $\ezw^k_M =\xd \ezvy^k_M$ is the {\it canonical multisymplectic form} on  $\wedge ^k \sT^\* M$.

In local coordinates, the Liouville 2-form on $\wedge ^2 \sT^\* M$ is given, by the formula
    $$ \ezvy^2_M = \frac{1}{2} p_{\zm\zn} \xd x^\zm\wedge \xd x^\zn,$$
and  the  canonical multisymplectic 3-form reads
    $$  \ezw^2_M = \xd \ezvy_M^2 =   \frac{1}{2} \xd p_{\zm\zn} \wedge\xd x^\zm\wedge \xd x^\zn \,.$$

\section{Some important differential operators}\label{s:3}
The {\it total derivative} (the {\it complete lift}) $\dt$ on a manifold $N$ is a graded derivation of degree 0 from $\zF(\ezp_N)$ to $\zF(\ezp_{\textsf{T} N})$ with respect to the pull-back homomorphism $\ezt_N^\*\colon\zF(\ezp_N) \to \zF(\ezp_{\textsf{T} N})$ (see \cite{PT}). A standard definition  is given by the graded commutator $\dt =  \xit \xd + \xd\xit$, where $\xit$ is a derivation $\xit\colon\zF(\ezp_N)\rightarrow\zF(\ezp_{\textsf{T} N})$ of degree -1, defined by the formula
    $$ \xit \zf(v_1, v_2, v_3,\dots, v_{p-1}) = \zf(u, \sT\ezt_N(v_1), \dots,\sT \ezt_N(v_{p-1}))\,, $$
where $v_i\in\sT_u \sT N$. By analogy, we define (cf. \cite{Bu}) a  graded differential  operator
    $$\dt^2\colon\zF(\ezp_N)\rightarrow\zF(\ezp_{\wedge ^2\textsf{T} N})$$
of degree -1 as the graded commutator
    \begin{equation}\label{F0.6}
        \dt^2 = \xd\, \xit^2 - \xit^2 \xd\,
    \end{equation}
where $\xit^2$ is a graded differential operator  of degree -2
    \begin{equation}\label{F0.4}\begin{split}
        \xit^2 &\colon \zF(\ezp_N) \rightarrow  \zF(\ezp_{\wedge ^2\textsf{T} N}) \\
        &\colon  \zF^p(\ezp_N) \rightarrow  \zF^{p-2}(\ezp_{\wedge ^2\textsf{T} N}),
    \end{split}\end{equation}
defined by the formula
    \begin{equation}\label{F0.5}
        \xit^2 \zf(v_1, v_2, v_3,\dots, v_{p-2}) = \zf(u, \sT \ezt^{\wedge 2}_N(v_1), \dots,\sT \ezt^{\wedge 2}_N(v_{p-2}))\,,
    \end{equation}
for $v_i\in \sT_u\wedge ^2\sT N$, $u\in \wedge ^2\sT N$.

It is obvious that $\dt$ commutes (in the graded sense) with the exterior derivative:
     \be\label{dd}\xd \dt^2 = \xd\, (\xd\xit^2 -\xit^2\xd\,) = -\xd\,\xit^2\xd\, + \xit^2\xd\xd\, =-(\xd\xit^2 -\xit^2\xd\,)\xd =-\dt^2\xd \,.
     \ee
In local coordinates, for a 1-form $\zf = \zf_\zm\xd x^\zm$, we have
$\xit^2 \zf = 0$ and
    \begin{equation}\label{F0.10}
        \dt^2 \zf = -\xit^2 \xd \zf = -\xit^2\left(\frac{1}{2}\left(
        \frac{\partial \zf_\zm}{\partial x^\zn} -  \frac{\partial
        \zf_\zn}{\partial x^\zm}\right) \xd x^\zn\wedge \xd x^\zm \right)=
        \frac{\partial \zf_\zm}{\partial x^\zn} {\dot x}^{\zm\zn}.
    \end{equation}
It is easy to see that, for a differentiable mapping $\zC\colon M\rightarrow N$,
    \begin{equation}\begin{split}\label{F0.7}
        \xit^2 \zC^\* &= \left(\wedge ^2\sT\zC\right)^\* \xit^2, \\
        \dt^2 \zC^\* &= \left(\wedge ^2\sT\zC\right)^\* \dt^2\,,
    \end{split}\end{equation}
and that a $p$-form $\za$, $p>1$, on $N$ is closed if and only if
\be\label{v} \tilde \za^\* \ezw^{p-2}_N = \dt^2\za\,,
\ee
where
    $$ \tilde \za \colon \wedge ^2\sT N \rightarrow  \wedge ^{p-2}\sT^\* N \colon v\mapsto \za(v,\cdot)$$
and $\ezw^{p-2}_N$ is the canonical $(p-1)$-form on  $\wedge ^{p-2}\sT^\* N$.
These properties correspond to known properties of $\xit$ and $\dt$ (\cite{GU}).

\section{Double vector bundles}\label{s:4}
A double vector bundle is a manifold with two compatible vector bundle structures. The compatibility conditions, in its original formulation (see \cite{KU, Pr}) are complicated, but in a recent work \cite{GR} one can find a very simple formulation of the compatibility condition. It is based on the observation that a vector bundle structure is encoded in its Euler vector field or, equivalently, in a multiplicative monoid $\R_{\geqs 0} = \{r\in \R \colon r\geqs 0\}$ action on the total vector bundle manifold. In local coordinates, the Euler vector field $X_E$ for a vector bundle $\zt\colon E\rightarrow M$ is given by
    $$ X_E = e^a\frac{\partial}{\partial e^a}.$$
A vector bundle morphism $\zf\colon E\rightarrow F$ is characterized by the property $\sT\zf X_E (e) = X_F(\zf(e))$ for each $e\in E$.
Two vector bundle structures are {\it compatible} if the corresponding Euler vector fields commute (equivalently, the monoid actions commute). A manifold with two compatible vector bundle structures is called a {\it double vector bundle}. A double vector bundle structure on a manifold $K$ is represented by the commutative diagram of vector bundle morphisms:
    \begin{equation}\label{F03.1}
        {\xymatrix@R-1mm @C-1mm{ & K \ar[ld]_*{\zt_l} \ar[rd]^*{\zt_r} & \cr
        E_l \ar[rd]_*{\bar\zt_r} & & E_r \ar[ld]^*{\bar\zt_l}  \cr & M  & }} .
    \end{equation}
For a double vector bundle, a zero section  of one vector bundle structure is a subbundle of the second one. The  projection of one vector bundle structure is a vector bundle morphism for the second one. In other words, the  pairs $(\zt_r,\bar\zt_r)$ and $(\zt_l,\bar\zt_l)$ are vector bundle morphisms. The intersection of kernels of vector bundle projections $\zt_r, \zt_l$ is a vector bundle over $M$, called {\it the core} of the double vector bundle. In other words, the core is a submanifold, where two Euler fields coincide.

A mapping of double vector bundles is a {\it double vector bundle morphism} if it is a vector bundle morphism for both vector bundle structures. It follows that the image of the core of the domain is contained in the core of the co-domain. Note that the tangent and cotangent bundle of a vector bundle carry canonical double vector bundle structures.

\begin{example}\label{C0.1a}{\rm
Let $\zt\colon E\rightarrow M$ be a vector bundle with the Euler vector field $X_E$. The manifold $\sT E$ has two vector bundle structures with  Euler vector fields
   $$ X_{\textsf{T} E} = \dot x^\zm \frac{\partial}{\partial \dot x^\zm} + \dot e^a\frac{\partial}{\partial \dot e^a} \ \ \ \text{(the vector bundle structure of $\ezt_{E}\colon \sT E \rightarrow E$)}$$
and
    $$X^1_{\textsf{T} E} = e^a \frac{\partial}{\partial  e^a} + \dot e^a\frac{\partial}{\partial \dot e^a} \ \ \ \text{(the vector bundle structure of $\sT\zt\colon \sT E \rightarrow \sT M$)}.$$
The core is the vector bundle of vertical vectors at the zero section of $E$ ($e^a=0, \dot x^\zm=0$).
}\end{example}

\begin{example}\label{C0.1b}{\rm
Less obvious is a double vector bundle structure in $\sT^\*E$ with Euler vector fields
   $$ X_{\textsf{T}^\* E} = p_\zm \frac{\partial}{\partial p_\zm} +  \zp^a\frac{\partial}{\partial  \zp^a} \ \ \ \text{(the vector bundle structure of $\ezp_{E}\colon \sT^\* E \rightarrow E$)}$$
and
   $$X^1_{\textsf{T}^\* E} = e^a \frac{\partial}{\partial  e^a} +  p_\zm\frac{\partial}{\partial  p_\zm} \ \ \ \text{(the vector bundle structure of $\sT^\*\zt\colon \sT^\* E \rightarrow E^\*$)  }.$$
The core is the bundle of co-vectors at the zero section, vanishing on vertical vectors\newline ($e^a=0, \zp^a=0$). It can be identified with  $\sT^\* M$.
}\end{example}

A double vector bundle has its dual vector bundles with respect to the right and to the left vector bundle structures. It appears that right and left dual bundles are also double vector bundles. For a double vector bundle (\ref{F03.1}) with the core bundle $C$, the diagram for the right dual looks as follows
    \begin{equation}\label{F03.2}
        {\xymatrix@R-1mm @C-1mm{ & K^{\*_r} \ar[ld]_*{\zp_l} \ar[rd]^*{\zp_r} & \cr
        E_r \ar[rd]^*{\bar\zt_r} & E_l^\* \ar[u] & C^\* \ar[ld]^*{\bar\zt_l}  \cr & M  & }} .
    \end{equation}
$E_l^\*$ is the core of $K^{\*_r}$. For  details see \cite{KU}.
\begin{remark} The Euler vector fields $X^1_{\textsf{T} E}$ and $X^1_{\textsf{T}^\* E}$ can be covariantly described, respectively, as the \emph{tangent} and the \emph{phase lift} of the Euler vector field $X= e^a \frac{\partial}{\partial  e^a}$ of the vector bundle $E$. For details we refer to \cite{GR}.
\end{remark}

\section{Bundle structures on bivectors}\label{s:5}
 A vector space (vector bundle) structure  is uniquely determined by the Euler vector field, however it can be also introduced by a pairing, i.e. by constructing the space of linear functions. We use this method to discuss vector bundle structures related to the fibration $\wedge ^2\sT E$ over $\wedge ^2\sT M$. In the next section we shall discuss also the Euler vector field approach.

It is easy to realize that the fibration $\wedge ^2\sT E\rightarrow \wedge ^2\sT M$ is not a canonical vector bundle structure. We will show that such structure exists in the quotient  $ \wedge ^2 \sT E \big/ \sV^2(E)$. In order to find a proper pairing, we begin  with a co-vector valued pairing.

Let $\zt  \colon E \rightarrow M $ and $\zz \colon F\rightarrow M$ be vector bundles and let
    $$ \zd \colon  E\times _M F\rightarrow \sT^\* M$$
be a bilinear vector bundle mapping. $\zd$ can be considered a semi-basic
1-form on $E\times _MF$. In local coordinates,
    $$  \zd = \zd_{\zm ai}(x)e^af^i \xd x^\zm,$$
where
$(x^\zm, e^a)$, $(x^\zm, f^i)$ are coordinates on $E,\,F$ respectively and $\zd_{\zm ai}$ are functions on $M$.
Using the formula (\ref{F0.10}) and coordinates as in (\ref{F0.3}),  we obtain local expression for the function
    $$\dt^2 \zd \colon \wedge ^2 \sT (E\times_MF) \rightarrow \R\,,$$
    namely
    \begin{equation}\label{F0.11}\begin{split}
        \dt^2 \zd &= -\xit^2 \left(\frac{\partial \zd_{\zm ai}}{\partial
        x^\zl}e^af^i \xd x^\zl\wedge \xd x^\zm  + \zd_{\zm ai} e^a \xd f^i \wedge \xd
        x^\zm  + \zd_{\zm ai} f^i \xd e^a \wedge \xd x^\zm \right)   \\
                        &= \frac{1}{2}\left( \frac{\partial \zd_{\zm ai}}{\partial
        x^\zl} - \frac{\partial \zd_{\zl ai}}{\partial
        x^\zm}  \right) {\dot x}^{\zm\zl} e^a f^i -\zd_{\zm ai}(e^a y^{\zm i} +
        f^i y^{\zm a}).
    \end{split}\end{equation}
We see that this formula does not contain double-vertical forms $\xd e^a\we\xd e^b$, $\xd e^a\we\xd f^i$, and $\xd f^i\we \xd f^j$. It follows that the function $\dt^2\zd$ can be projected to a function on $\wedge ^2\sT (E\times _MF)\big/ \sV^2(E\times _MF)$.
The projected function we denote with $\bdt^{\,2}\zd$.
\begin{remark}

Let $\zt \colon V\rightarrow M$ be a fibration. A form $\zf$ on $V$ is called {\it semi-basic} if, for each vertical vector $v\in \sV(V)$, we have $\xi_v\zf = 0$. In other words, a semi-basic form is a combination of forms $f\zt^\* \za$, where $f$ is a function on $V$. In local coordinates $(x^\zm,e^a)$, a semi basic form contains expressions $\za_{i_1\cdots i_k}(x,e) \xd x^{i_1}\cdots\xd x^{i_k}$  only. In this terminology, $\dt^2\zd$ turns out to be semi-basic on $V=E\ti_MF$.

One can prove that, in general, if $\zf$ is a semi-basic form on a fibration
$\zt \colon V\rightarrow M$, then $\xit^2 \zf$ and $\dt^2 \zf$ are semi-basic with respect to the fibration
    $$
        \wedge ^2 \sT\zt \colon \wedge ^2 \sT V \rightarrow \wedge ^2\sT M\,.
    $$
\end{remark}
\begin{proposition}\label{C0.3}
The bundles
\begin{equation}\label{F0.12}
    \wedge ^2\sT (E\times _MF)\big/ \sV^2(E\times _MF)\ \
    \text{and}\ \  \left(\wedge ^2\sT E \big/ \sV^2(E) \right)
    \times_{\wedge ^2\sT M} \left(\wedge ^2\sT F \big/ \sV^2(F) \right)
\end{equation}
are  canonically isomorphic.
\end{proposition}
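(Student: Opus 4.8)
The plan is to build the isomorphism directly from the canonical projections of the fibre product and then to verify it is a diffeomorphism in the adapted coordinates of (\ref{F0.3}). First I would take the two canonical projections $\mathrm{pr}_E\colon E\times_M F\to E$ and $\mathrm{pr}_F\colon E\times_M F\to F$, which satisfy $\zt\circ\mathrm{pr}_E=\zz\circ\mathrm{pr}_F$, the common map being the bundle projection $E\times_M F\to M$. Applying $\wedge^2\sT$ gives $\wedge^2\sT\mathrm{pr}_E\colon\wedge^2\sT(E\times_M F)\to\wedge^2\sT E$ and $\wedge^2\sT\mathrm{pr}_F\colon\wedge^2\sT(E\times_M F)\to\wedge^2\sT F$ with $\wedge^2\sT\zt\circ\wedge^2\sT\mathrm{pr}_E=\wedge^2\sT\zz\circ\wedge^2\sT\mathrm{pr}_F$, so the pair $(\wedge^2\sT\mathrm{pr}_E,\wedge^2\sT\mathrm{pr}_F)$ defines a canonical map
$$\zF\colon\wedge^2\sT(E\times_M F)\longrightarrow\wedge^2\sT E\times_{\wedge^2\sT M}\wedge^2\sT F .$$

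Next I would check that $\zF$ descends to the quotients in (\ref{F0.12}). Since the $M$-projection of $E\times_M F$ factors as $\zt\circ\mathrm{pr}_E$, every $v\in\sV(E\times_M F)$ satisfies $\sT\mathrm{pr}_E(v)\in\sV(E)$, so $\wedge^2\sT\mathrm{pr}_E$ carries $\sV^2(E\times_M F)=\wedge^2_{E\times_M F}\sV(E\times_M F)$ into $\sV^2(E)=\wedge^2_E\sV(E)$, and symmetrically for $F$. Also $\sV^2(E)\subset\sV^1(E)=\ker\wedge^2\sT\zt$, so $\wedge^2\sT\zt$ factors through $\wedge^2\sT E/\sV^2(E)$ and the right-hand bundle of (\ref{F0.12}) is well defined; thus $\zF$ induces a canonical morphism
$$\bar\zF\colon\wedge^2\sT(E\times_M F)\big/\sV^2(E\times_M F)\longrightarrow\big(\wedge^2\sT E\big/\sV^2(E)\big)\times_{\wedge^2\sT M}\big(\wedge^2\sT F\big/\sV^2(F)\big),$$
compatible with the projections onto $E\times_M F$ and onto $\wedge^2\sT M$.

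Finally I would show that $\bar\zF$ is a diffeomorphism; this is the only step needing care, and it amounts to bookkeeping of the verticality conventions in (\ref{F0.3}) rather than to any genuine obstacle. In those adapted coordinates, $\wedge^2\sT(E\times_M F)/\sV^2(E\times_M F)$ has coordinates $(x^\zm,e^a,f^i,\dot x^{\zm\zn},y^{\zm a},y^{\zm i})$, the double-vertical coordinates $\dot e^{ab},z^{ai},\dot f^{ij}$ being killed in the quotient, while the target carries exactly the same set of coordinates because those of $\wedge^2\sT M$ are $(x^\zm,\dot x^{\zm\zn})$; and since $\sT\mathrm{pr}_E$ acts by $\partial/\partial x^\zm\mapsto\partial/\partial x^\zm$, $\partial/\partial e^a\mapsto\partial/\partial e^a$, $\partial/\partial f^i\mapsto 0$ (and symmetrically for $\mathrm{pr}_F$), $\bar\zF$ is literally the identity in these coordinates. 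Equivalently one may argue fibrewise over $(e,f)\in E\times_M F$: with $A=\sT_e E$, $B=\sT_f F$, $C=\sT_{\zt(e)}M$ and kernels $V=\sV(E)_e$, $W=\sV(F)_f$ one has $\sT_{(e,f)}(E\times_M F)=A\times_C B$, and a short computation with the induced grading of $\wedge^2(A\times_C B)$ shows that $\sV^2(E\times_M F)$ is exactly the preimage of $\sV^2(E)\times_{\wedge^2\sT M}\sV^2(F)$ under $(\wedge^2\sT\mathrm{pr}_E,\wedge^2\sT\mathrm{pr}_F)$, which gives injectivity, while reconstructing a bivector from its $C\otimes V$ and $C\otimes W$ components gives surjectivity. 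Since $\bar\zF$ is assembled solely from the canonical projections of the fibre product, the isomorphism is manifestly canonical.
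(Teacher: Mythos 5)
Your proof is correct and follows essentially the same route as the paper: both construct the canonical map from $\wedge^2\sT p_E$ and $\wedge^2\sT p_F$ into the fibre product, check that it sends $\sV^2(E\times_M F)$ onto $\sV^2(E)$ and $\sV^2(F)$, and then verify that it descends to an isomorphism of the quotients. The only cosmetic difference is that the paper establishes bijectivity by noting the kernel of the map is contained in $\sV^2(E\times_M F)$, whereas you verify the same fact by explicit coordinate and fibrewise computation.
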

\noindent\begin{proof}
Let
    \begin{equation}\label{F0.13}\begin{split}
        p_E &\colon E\times _MF \rightarrow E\,,  \\
        p_F & \colon E\times _M F\rightarrow F\,,
    \end{split}\end{equation}
be the canonical projections. We consider the mapping
    $$ \wedge ^2 \sT p_E \times _{\wedge ^2\textsf{T} M}\wedge ^2\sT p_F
    \colon  \wedge ^2\sT (E\times _M F) \rightarrow \wedge ^2 \sT E \times
    _{\wedge ^2\textsf{T} M} \wedge ^2 \sT F.$$
Since projections $p_E$ and $p_F$ are vector bundle morphisms, we have
    \begin{equation}\label{F0.14}\begin{split}
        \wedge ^2 \sT p_E (\sV^2(E\times _MF)) &= \sV^2(E)\,,   \\
        \wedge ^2 \sT p_F (\sV^2(E\times _MF)) &= \sV^2(F)\,.
    \end{split}\end{equation}
Moreover, $\wedge ^2 \sT p_E \times _{\wedge ^2\textsf{T} M}\wedge ^2\sT p_F$ is a vector bundle morphism of vector bundles over $E\times _MF$.
It is obvious that the kernel of this morphism is contained in $\sV^2(E\times _M F)$. It follows then from (\ref{F0.14}) that
$\wedge ^2 \sT p_E \times _{\wedge^2\textsf{T} M}\wedge ^2\sT p_F$ projects to an isomorphism of
$\wedge ^2\sT(E\times _MF)\big/ \sV^2(E\times _MF)$
and  $\left(\wedge ^2\sT E \big/ \sV^2(E) \right) \times_{\wedge ^2\textsf{T} M} \left(\wedge ^2\sT F \big/ \sV^2(F) \right)$.
\end{proof}

In the following, we denote by $\bar\wedge ^2\sT E$ the quotient bundle  $\wedge ^2\sT E \big/ \sV^2(E)$.
The above proposition implies that $\bdt^{\,2}\zd$ defines a pairing between $\bar\wedge ^2\sT E$ and $\bar\wedge ^2\sT F$ as bundles over $\wedge^2\sT M$. Such pairing (if not degenerate) can be used to define a vector bundle structure in the fibration $\wedge ^2 \sT\zt \colon \wedge ^2 \sT E \rightarrow \wedge ^2
\sT M $. We shall consider two special cases.

\subsection{The special case: $F= E^\*\otimes _M\sT^\* M$}\label{h:6}
Let  $F= E^\*\otimes _M\sT^\* M$ and let
    \begin{equation}\label{F0.15}\begin{split}
        \zd_E &\colon E\times _M(E^\*\otimes _M\sT^\* M) \rightarrow  \sT^\* M \\
        &\colon (e,\zx\otimes \za) \mapsto \langle \zx, e\rangle \za.
    \end{split}\end{equation}
In local coordinates,
    $$ \zd_E = e^a f_{a\zm}\xd x^\zm$$
and
    \begin{equation}\label{F0.16}
        \dt^2 \zd_E = -e^a y^\zn_{a\zn} - f_{a\zn}y^{\zn a},
    \end{equation}
where $y^{\zn a}$ are coordinates in $\wedge ^2\sT E$ and
$y^\zn_{a\zm}$ are coordinates in $\wedge ^2\sT (E^\*\otimes _M\sT^\* M)$.
We see from this formula and Proposition~(\ref{C0.3}) that
$\dt^2\zd$ projects to a function
    $$ \bdt^{\,2}\zd_E \colon \left(\wedge ^2\sT E \big/ \sV^2(E) \right) \times_{\wedge ^2\textsf{T} M} \left(\wedge ^2\sT F \big/ \sV^2(F) \right) \rightarrow \R.$$
Hence, each element of $u\in \wedge ^2 \sT E$ defines the function
    \begin{equation}\label{F0.17}\begin{split}
        g_u &\colon (\wedge ^2\sT(\zp\otimes _M\ezp_M))^{-1}(w)\rightarrow \R,
        \ \ w =\wedge ^2\sT\zt (u)   \\
        &\colon  v \mapsto \dt^2 \zd_E (u,v).
    \end{split}\end{equation}
Let $u,u' \in \wedge ^2\sT E$ be such that $\wedge ^2\sT\zt (u) =
\wedge ^2\sT\zt (u')$. It follows from (\ref{F0.16}) that $g_u = g_{u'}$ if and
only if $\zt^{2}_E(u) = \zt^{2}_E(u') $ and $u-u' \in
\sV^2(E)$. Moreover, the set $\{g_u \colon \wedge ^2\sT\zt(u) =w\}$ is a
vector subspace in the vector space of all functions on $(\wedge
^2\sT(\zp\otimes _M\ezp_M))^{-1}(w)$.
We have proved the following.
\begin{theorem}\label{C0.4}
There is a canonical vector bundle structure on the fibration
    $$ \wedge ^2\sT\zt \colon  \wedge ^2 \sT E \big/ \sV^2(E)\rightarrow  \wedge ^2 \sT M.$$
\end{theorem}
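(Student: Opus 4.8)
The plan is to construct the vector bundle structure on $\bar\wedge^2\sT E = \wedge^2\sT E/\sV^2(E)$ over $\wedge^2\sT M$ by realizing each fiber as a space of linear functions via the pairing $\bdt^{\,2}\zd_E$, and then to check that the monoid/Euler structure it defines is smooth and satisfies the bundle axioms. First I would take the special case $F = E^\*\otimes_M\sT^\* M$ set up in Section~\ref{h:6} and observe, from the local formula \eqref{F0.16}, that the function $g_u$ attached to $u\in\wedge^2\sT E$ (defined on the fiber of $\wedge^2\sT(\zp\otimes_M\ezp_M)$ over $w=\wedge^2\sT\zt(u)$) is \emph{affine} in the fiber coordinates $y^\zm_{a\zn}$ — in fact it is $-e^a y^\zn_{a\zn} - f_{a\zn} y^{\zn a}$, so its linear part is governed by $y^{\zn a}$ and its constant part by $\zt^2_E(u)\in E$. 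The crucial separation property, already noted right before the theorem, is that $g_u = g_{u'}$ (for $u,u'$ over the same $w$) if and only if $\zt^2_E(u)=\zt^2_E(u')$ and $u-u'\in\sV^2(E)$; this is exactly the statement that the assignment $[u]\mapsto g_{[u]}$ is a well-defined \emph{injection} of the fiber $(\bar\wedge^2\sT E)_w$ into the affine space of functions on the fiber of $\wedge^2\sT F/\sV^2(F)$ over $w$.

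Next I would identify the image. From \eqref{F0.16} the functions $g_u$, as $u$ ranges over $(\wedge^2\sT\zt)^{-1}(w)$, sweep out precisely those affine functions whose linear part is of the form $v\mapsto -f_{a\zn}y^{\zn a}$ for arbitrary $y^{\zn a}$ and whose constant term is an arbitrary element of $E_{x}$ paired as $-e^a y^\zn_{a\zn}$; this set is closed under addition and scalar multiplication of functions, hence is a linear subspace $W_w$ of the function space on the fiber. Transporting the linear structure of $W_w$ back through the bijection $(\bar\wedge^2\sT E)_w \xrightarrow{\ \sim\ } W_w$ equips each fiber of $\bar\wedge^2\sT E \to \wedge^2\sT M$ with a vector space structure. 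To see this is a genuine vector bundle I would exhibit local triviality: over a chart of $M$ with adapted coordinates \eqref{F0.3}, the fiber coordinates on $\bar\wedge^2\sT E$ that survive the quotient by $\sV^2(E)$ are $(\dot x^{\zm\zn}, y^{\zm a})$ — with base $(x^\zm,\dot x^{\zm\zn})$ on $\wedge^2\sT M$ — and one checks that the linear structure just constructed is the obvious fiberwise-linear one in the $y^{\zm a}$, with the $\dot x^{\zm\zn}$ relegated to the base. Equivalently, in the language of Section~\ref{s:4} one can write down the candidate Euler vector field $y^{\zm a}\,\partial/\partial y^{\zm a}$ on $\bar\wedge^2\sT E$ and verify it is well defined (descends from $\wedge^2\sT E$ modulo $\sV^2(E)$) and complete, which by the Grabowski--Rotkiewicz characterization recalled in the excerpt is equivalent to a vector bundle structure; coordinate changes induced from changes of $(x^\zm,e^a)$ are checked to be linear in $y^{\zm a}$ (fiberwise) because $p_E$ is a vector bundle morphism and $\bdt^{\,2}$ is natural by \eqref{F0.7}.

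The main obstacle — and the step deserving the most care — is the \emph{intrinsic} (coordinate-free) verification that the subspace $W_w$ of functions is independent of choices and that the linear operations transported to $(\bar\wedge^2\sT E)_w$ are smooth in $w$, i.e. that we really get a bundle and not merely a fiberwise vector space. The cleanest route is to lean on Proposition~\ref{C0.3}, which guarantees that $\bdt^{\,2}\zd_E$ is a genuine pairing of $\bar\wedge^2\sT E$ with $\bar\wedge^2\sT F$ as bundles over $\wedge^2\sT M$, together with the naturality identities \eqref{F0.7} for $\xit^2$ and $\dt^2$ under vector bundle morphisms; these together force the affine structure exhibited in coordinates to patch. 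One must also confirm that $F=E^\*\otimes_M\sT^\* M$ itself carries the requisite bundle structure on $\bar\wedge^2\sT F$ so that ``pairing of bundles'' makes sense — but this is the second special case, handled by the same mechanism, or one simply observes that the quotient-coordinate description of $\bar\wedge^2\sT F$ by $(x^\zm, f_{a\zn}, \dot x^{\zm\zn}, y^\zm_{a\zn})$ makes the fiberwise linearity in $(f_{a\zn}, y^\zm_{a\zn})$ transparent. Once local triviality and smoothness of the linear operations are in hand, the vector bundle axioms are automatic, and the theorem follows.
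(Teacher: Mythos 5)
Your overall strategy is the one the paper itself uses: pair $\wedge^2\sT E$ against $\wedge^2\sT F$ for $F=E^\*\otimes_M\sT^\*M$ via $\bdt^{\,2}\zd_E$, embed each fibre of $\bar\wedge^2\sT E\to\wedge^2\sT M$ into a function space through $[u]\mapsto g_u$, note that the image $W_w$ is a linear subspace, and transport its linear structure back. Up to that point the argument is sound, modulo a harmless slip of orientation: in $g_u(v)=-e^a(u)\,y^\zn_{a\zn}(v)-f_{a\zn}(v)\,y^{\zn a}(u)$ both summands are linear in $v$, and it is the term $-e^a y^\zn_{a\zn}$ that is governed by $\zt^{2}_E(u)$, not the other one.

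The genuine error is in your identification of the resulting structure in coordinates. The base $\wedge^2\sT M$ carries only $(x^\zm,\dot x^{\zm\zn})$, so the fibre of $\bar\wedge^2\sT E$ over a point of $\wedge^2\sT M$ is coordinatized by $(e^a,y^{\zm a})$: the coordinate $e^a$ lives in the fibre, not in the base, and $\dot x^{\zm\zn}$ lives in the base, not in the fibre (your list ``fibre coordinates $(\dot x^{\zm\zn},y^{\zm a})$'' together with ``base $(x^\zm,\dot x^{\zm\zn})$'' is inconsistent and omits $e^a$ altogether). Since $g_u$ depends linearly on the pair $\bigl(e^a(u),y^{\zn a}(u)\bigr)$, multiplying $g_u$ by $t$ in $W_w$ corresponds to rescaling both $e^a$ and $y^{\zm a}$; the Euler vector field of the transported structure is therefore $e^a\pa_{e^a}+y^{\zk b}\pa_{y^{\zk b}}$, exactly as the paper records right after the theorem, and not $y^{\zm a}\pa_{y^{\zm a}}$. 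Your candidate field $y^{\zm a}\pa_{y^{\zm a}}$ has zero locus $E\times_M\wedge^2\sT M$ rather than $\wedge^2\sT M$, so it cannot define a vector bundle structure on the fibration named in the theorem, and the ``check'' that it agrees with the structure transported from $W_w$ would fail. Once the Euler field is corrected, the remainder of your argument (injectivity of $[u]\mapsto g_u$ from (\ref{F0.16}), linearity of $W_w$, naturality of $\dt^2$ under vector bundle morphisms for patching) goes through and coincides with the paper's proof.
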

\noindent The coordinates
    $$(x^\zm, e^a, {\dot x}^{\zl\zn}, y^{\zk b}, {\dot e}^{cd})$$
on $\wedge ^2\sT E$, introduced by the formula (\ref{F0.3}), induce coordinates
    \be\label{coo}(x^\zm, e^a, {\dot x}^{\zl\zn}, y^{\zk b})
    \ee
on $\bar\wedge ^2 \sT E$.
The canonical projection
    $$\wedge ^2\sT\zt \colon  \bar\wedge ^2 \sT E\rightarrow  \wedge ^2 \sT M$$
reads in coordinates
$$\wedge ^2\sT\zt(x^\zm, e^a, {\dot x}^{\zl\zn}, y^{\zk b})=(x^\zm, {\dot x}^{\zl\zn})\,,$$
so that $(e^a, y^{\zk b})$ are linear coordinates and the Euler vector field reads
$$\bar X_{\bar\wedge ^2 \textsf{T} E}=e^a\pa_{e^a}+ y^{\zk b}\pa_{y^{\zk b}}\,.$$
Since it commutes with the standard Euler vector field
$$X_{\bar\wedge ^2 \textsf{T} E}={\dot x}^{\zl\zn}\pa_{{\dot x}^{\zl\zn}}+ y^{\zk b}\pa_{y^{\zk b}}\,,$$ the two vector bundle structures on
$\bar\wedge ^2 \sT E$ are compatible, i.e. form a double vector bundle. The core of this double vector bundle is canonically isomorphic to $\sT M\otimes_M E$.

\subsection{The special case: ${E=\sT M, F=\wedge ^2\sT^\* M }$}
We denote by
$\zD^2_M$ the mapping $\zd_{\sT M}$ restricted to $\sT M \times _M \wedge
^2\sT^\* M$:
    \begin{equation}\label{F0.20}\begin{split}
        \zD^2_M &\colon \sT M\times _M \wedge ^2\sT^\* M \rightarrow  \sT^\* M \\
        & \colon   (v,\za)\mapsto \ix_ v\za,
    \end{split}\end{equation}
where $\ix_v\za (w) = \za(v,w)$.
In local coordinates,
    $$ \zD^2_M = p_{\zm\zn} {\dot x}^\zm \xd x^\zn$$
and
   \begin{equation}\label{F0.21}
        \dt^2\zD^2_M = -p_{\zm\zn}y^{\zm\zn} - {\dot x}^\zn
        y^\zm_{\zm\zn} = -\frac{1}{2} p_{\zm\zn}(y^{\zm\zn} -y^{\zn\zm}) - {\dot x}^\zn
        y^\zm_{\zm\zn}.
   \end{equation}
Thus, the function $\dt^2\zD^2_M$ represents a `pairing' between the fibrations
    $$ \wedge ^2\sT \ezt_M \colon  \wedge ^2\sT\,\sT M \rightarrow\wedge ^2 \sT M$$
and
    $$ \wedge ^2\sT \ezp_M^{2} \colon  \wedge ^2\sT\,\wedge^2\sT^\* M \rightarrow \wedge ^2 \sT M$$
which projects to an actual pairing of vector bundles
    $$ \bar\wedge ^2\sT \ezt_M \colon  \bar\wedge ^2\sT\,\sT M \rightarrow\wedge ^2 \sT M$$
and
    $$ \bar\wedge ^2\sT \ezp_M^{2} \colon  \bar\wedge ^2\sT\,\wedge^2\sT^\* M  \rightarrow \wedge ^2 \sT M.$$
Since $\zD^2_M$ is the mapping $\zd_{\textsf{T} M}$ restricted to $\sT M \times _M \wedge^2\sT^\* M$, the pairing $\bdt^{\,2}\zD^2_M$ is bilinear with respect to vector bundle structures defined by $\bdt^{\,2}\zd_{\textsf{T} M}$ and $\bdt^{\,2}\zd_{\wedge^2\sT^\s* M}$, respectively.

\section{The mapping `kappa' for bivectors}\label{s:6}
As it was stressed in the introduction, a fundamental role in analytical mechanics is played by the canonical
isomorphism of double vector bundles
\begin{equation} \label{F0.22}
    {\xymatrix @C-1mm{  & \sT \sT M \ar[ldd]_*{\sT\zt_M} \ar[rd]^*{\zt_{\ssT M}}
    \ar[rrr]^*{\zk_M} & & & \sT \sT M \ar[ldd]_(.3)*{\zt_{\ssT M }}  \ar[rd]^*{\sT\zt_M} & \cr
    & & \sT M \ar[ldd]_(.3)*{\zt_M}  \ar[rrr]^(.7)*{\text{id}} & & & \sT M \ar[ldd]_*{\zt_M} \cr
    \sT M  \ar[rd]^*{\zt_M} \ar[rrr]^(.7)*{\text{id} } & & &
    \sT M \ar[rd]^*{\zt_M}  & &  \cr
    & M \ar[rrr]^*{\text{id}} & & & M &}} \ \ \
\end{equation}
which encodes the Lie algebroid structure of $\sT M$ \cite{GG,GU1}.

Let us consider a sequence of mappings
    $$\xymatrix@C+5mm{\sT\sT M \times _{\ezt_{\textsf{T} M}} \sT\sT M \ar[r]^{\zk_M\times
    \zk_M} & \sT\sT M\times _{\sT\ezt_M} \sT\sT M \ar[r]^/+4mm/{\sT \wedge ^2} & \sT \wedge ^2\sT M}.$$
Since $\wedge ^2$ is bilinear and skew-symmetric,  also $\sT \wedge
^2$ is bilinear  with respect to the tangent vector bundle structures and
skew-symmetric. It follows that the composition $\sT \wedge ^2 \circ
(\ezk_M\times \ezk_M)$  is bilinear and skew-symmetric and consequently,
defines a mapping
    \begin{equation}\label{F0.23}
        \ezk_M^2 \colon  \wedge ^2\sT \sT M \longrightarrow \sT \wedge ^2\sT M
    \end{equation}
which is a vector bundle morphism with respect to the vector bundle
structures over $\sT M$ (see \cite{GU}). It follows from the construction that
\begin{enumerate}
    \item $\ezt_{\wedge ^2\textsf{T} M} \circ  \ezk_M^2 = \wedge ^2\sT \ezt_M$,
        i.e. $\ezk_M^2$ respects the fibrations over $\wedge ^2 \sT M$,
    \item the subbundle $\sV^2(\sT M)$ is contained in the kernel of $\ezk_M^2$.
\end{enumerate}
The first  is an immediate consequence of the fact that $\ezk_M$ intertwines projections $\zt_{\sT M}$ and $\sT\ezt_M$. In order to see that  $\sV^2(\sT M)$ is contained in the kernel of $\ezk_M^2$, let us take two vertical vectors $w_1,w_2\in \sV \sT M$ with $ \ezt_{\textsf{T} M}(w_1)= \ezt_{\textsf{T} M}(w_2)$. The  vectors $\zk_M(w_i)$ have the properties: $\ezt_{\textsf{T} M}(\zk_M(w_i))=0$ and $\sT\ezt_{M}(\zk_M(w_1))= \sT\ezt_{M}(\zk_M(w_2))$. It follows
that in a local trivialization of $\sT M$, the vector $w_i$ can be represented by a curve in $\sT M$ of the form $t\mapsto (x(t), tv_i)$. Consequently, $\ezk^2_M(w_1\wedge w_2)$ is represented by the curve $(x(t),t^2v_1\wedge v_2)$, which represents a zero vector. By definition, $\sV^2(\sT M)$ is spanned by simple bi-vectors, i.e. of the form  $w_1\wedge w_2$, where $w_1,w_2\in \sV \sT M$. It follows that $\sV^2(\sT M)$ is contained in the kernel of $\ezk_M^2$.

In local coordinates,
    \begin{equation} \label{F0.24}
        (x^\zm, {\dot x}^{\zk\zl}, {x'}^{\zn}, {\dot
        x'}{}^{\zr\zs})\circ \ezk_M^2 = (x^\zm, {\dot x}^{\zk\zl}, {\dot x}^\zn,
        (y^{\zr\zs} - y^{\zs\zr}))\,.
    \end{equation}

Now, we find a coordinate-free description of the kernel of $\ezk^2_M$ as a vector bundle morphism over $\sT M$. As a tangent bundle of a vector bundle, $\sT \wedge ^2\sT M$ is a double vector bundle  (\cite{KU}) and it follows that the zero section of the vector bundle $\sT \zt_M^{\wedge 2}
\colon \sT \wedge ^2\sT M \rightarrow \sT M$ lies in the kernel of the projection $\ezt_{\wedge ^2\textsf{T} M}$. Since $\ezk_M^2$ respects the fibrations over $\wedge ^2\sT M$, we have $\wedge ^2\sT \ezt_M (u) =0 \in \wedge ^2\sT M$ for $u\in \ker \ezk_M^2$. As we have already noticed, $\sV^2(\sT M) \subset \ker \ezk_M^2$. Thus, the kernel is completely characterized by its image in the vector bundle $\bar\wedge ^2 \sT \sT M$. It is a general fact for a double vector bundle \ref{F03.1} with the core bundle $C$ that $\zt^{-1}_r(0)$ is canonically isomorphic to $E_l\times_M C$. It follows that
 $$ \bar\wedge ^2 \sT \sT M \supset (\ezt_{\wedge ^2\textsf{T} M})^{-1}(0) = \sT M\times_M (\sT M\otimes _M \sT M)\,,$$
 and the canonical projection $\ezt^{\wedge 2}_{\textsf{T} M}$ corresponds to the projection on the first component. Similarly,
    $$(\ezt_{\wedge ^2\textsf{T} M})^{-1}(0) = \sT M\times_ M \wedge ^2\sT M\,,$$
and we get the induced mapping
    $$ \ezk^2_M \colon \sT M\times_M (\sT M\otimes _M \sT M) \rightarrow\sT M\times_ M \wedge ^2\sT M \,.$$
It follows from the local formulae that
    $$\ezk^2_M \colon (v, u\otimes w)\mapsto (v, u\wedge w)$$
and that the kernel of $\ezk^2_M$ consists of bivectors in $\sV^1(\sT M)$,
which represent symmetric elements in $\sT M\otimes _M \sT M$.
It is now easy task to verify that $\ezk_M^2$ induces the following commutative diagram of maps
    \begin{equation}\label{F0.25a}
        {\xymatrix @C-2mm{   & \wedge ^2\sT \sT M  \ar[ldd]_*{\wedge^2\sT\ezt_M} \ar[rd]^*{\zt_{\textsf{T} M}^{\wedge 2}}
        \ar[rrr]^*{\zk_M^2} & & & \sT\wedge ^2 \sT M \ar[ldd]_(.3)*{\zt_{\wedge ^2\textsf{T} M }} \ar[rd]^*{\sT\ezt_M^{\wedge 2}} & \cr
        & & \sT M \ar[ldd]_(.3)*{\zt_M} \ar[rrr]^(.7)*{\text{id}} & & & \sT M \ar[ldd]_*{\zt_M}\cr
        \wedge ^2 \sT M  \ar[rd]^*{\zt_M^{\wedge 2}} \ar[rrr]^(.7)*{\text{id}} & & & \wedge ^2\sT M \ar[rd]^*{\zt_M^{2}} & &  \\
        & M \ar[rrr]^*{\text{id}}  & & & M &}} \ \ \ \,,
\end{equation}
which projects onto a morphism of double vector bundles:
    \begin{equation}\label{F0.25}
        {\xymatrix @C-2mm{   & \bar\wedge ^2\sT \sT M  \ar[ldd]_*{\bar\wedge^2\sT\ezt_M} \ar[rd]^*{\bar\zt_{\textsf{T} M}^{\wedge 2}}
        \ar[rrr]^*{\bar\zk_M^2} & & & \sT\wedge ^2 \sT M \ar[ldd]_(.3)*{\zt_{\wedge ^2\textsf{T} M }} \ar[rd]^*{\sT\ezt_M^{\wedge 2}} & \cr
        & & \sT M \ar[ldd]_(.3)*{\zt_M} \ar[rrr]^(.7)*{\text{id}} & & & \sT M \ar[ldd]_*{\zt_M}\cr
        \wedge ^2 \sT M  \ar[rd]^*{\zt_M^{\wedge 2}} \ar[rrr]^(.7)*{\text{id}} & & & \wedge ^2\sT M \ar[rd]^*{\zt_M^{2}} & &  \\
        & M \ar[rrr]^*{\text{id}}  & & & M &}} \ \ \ .
\end{equation}

\subsection{Graded bundle approach}\label{s5.3}
The question of the bundle structures in $\wedge^2\sT E$ can be approached differently following the concept of a graded bundle \cite{GR2} which generalizes that of a vector bundle (see also \cite{Roy, Sev}). The role of the Euler vector field is played by the {\it weight vector field} which in homogeneous coordinates is a linear diagonal vector field with weights being positive integers. An important observation is that
graded bundles of degree 1 are just vector bundles, as weight vector fields are Euler vector field. There is also a nice interpretation of graded bundles in terms of smooth actions of the multiplicative monoid $(\R,\cdot)$ \cite{GR,GR2}.

It is obvious, that
$\ezt^2_E\colon\wedge^2\sT E\rightarrow E$ is a vector bundle with the Euler vector field
\begin{equation}\label{h:3}
    X_{\wedge^2\textsf{T} E}=\dot x^{\zm\zn}\frac{\partial}{\partial \dot x^{\zm\zn}}+ y^{\zm a}\frac{\partial}{\partial y^{\zm a}}+\dot e^{bc}\frac{\partial}{\partial \dot e^{bc}}.
\end{equation}
Another graded bundle structure is given by the weight vector field $\xd_{\sT}^2X_E$ which in coordinates reads
\begin{equation}\label{h:1}
    \xd_{\textsf{T}}^2X_E=e^a\frac{\partial}{\partial e^{a}}+y^{\zm d}\frac{\partial}{\partial y^{\zm d}}+2\dot e^{bc}\frac{\partial}{\partial \dot e^{bc}}\,,
\end{equation}
so that makes $\wedge^2\sT E$ into a graded bundle of degree 2.
The coordinates $(x^\zm, \dot x^{\zm\zn})$ are of degree $0$,  $(e^a, y^{\zm b})$ are of degree $1$, and $(\dot e^{ab})$ are of degree $2$. The reduction to coordinates of degree $\le 1$ gives the vector bundle $\bar\wedge^2\sT\ezt\colon\bar\wedge^2\sT E\to \wedge^2 \sT M$. More precisely,
the canonical inclusions of function algebras
\begin{equation}\label{h:5}
        \mathcal{A}_2\hookleftarrow \mathcal{A}_1\hookleftarrow\mathcal{A}_0=\mathcal{C}^\infty(\wedge^2\sT M)\,,
    \end{equation}
where $\mathcal{A}_k$ is the algebra of polynomials in homogeneous functions of degree $\le k$,
induce the sequence of bundle projections
\begin{equation}\label{h:4}
        \wedge^2\sT E\longrightarrow \bar\wedge^2\sT E\longrightarrow \wedge^2 \sT M,
    \end{equation}
where the first is an affine, and the second is a vector bundle projection. Elements od $\mathcal{A}_1$ were interpreted in section \ref{h:6} as sections of the quotient bundle $\bar\wedge^2\sT E\rightarrow \wedge^2\sT M$, since coordinates (\ref{coo}) are clearly those of degree $\le 1$.
The first projection coincides with the canonical projection from $\wedge^2\sT E$ to the quotient space $\bar\wedge^2\sT E=\wedge^2\sT E\slash \sV^2 E$.

Note that the vector field $\xd_{\textsf{T}}^2X_E$ is the "bi-tangent" lift of the vector field $X_E$ to the bundle of bivectors. Recall that the tangent (complete) lift of a vector field $Y$ on a manifold $M$ to the vector field $\xd _\sT Y$ on $\sT M$ can be defined as
$$\xd_\textsf{T} Y=\kappa_M\circ\sT X.$$
In the context of bivectors we can do the same using $\wedge^2\sT$ functor and $\kappa_M^2$ map. The vector field $\xd_\textsf{T}^2 Y$ is defined as
$$\xd_\textsf{T}^2Y=\kappa_M^2\circ\wedge^2\sT Y.$$
In coordinates $(x^\zm)$ in $M$ and $(x^\zm, \dot x^{\zm\zn})$ in $\wedge^2\sT M$ the lift of a vector field $Y^\zm\frac{\partial}{\partial x^\zm}$ reads
$$\xd_\textsf{T}^2X=Y^\zm\frac{\partial}{\partial x^\zm}+\frac12\left(\frac{\partial Y^\zn}{\partial x^\zl}\dot x^{\zl\zk}-\frac{\partial Y^\zk}{\partial x^\zs}\dot x^{\zs\zn}\right)\frac{\partial}{\partial \dot x^{\zn\zk}}.$$
Applying the above formula to the Euler vector field $X_E=e^a\frac{\partial}{\partial e^a}$ of the bundle $\zt$ we get precisely (\ref{h:1}). It is easy to check that the vector fields $X_{\wedge^2\ssT E}$ and $\xd_\textsf{T}^2X$ commute, therefore $\wedge^2\sT E$ is a double graded bundle
    \begin{equation}\label{h:2}
        {\xymatrix@R-1mm @C-1mm{ & \wedge^2\sT E \ar[ld]_*{\zt^{\wedge 2}_{E}} \ar[rd]^*{\wedge^2 {\tau_M}} & \cr
        \quad E \ar[rd]_*{\tau} & & \wedge^2\sT M \ar[ld]^*{\zt^{\wedge2}_M}  \cr & M  & }} .
    \end{equation}
The weight vector fields $X_{\wedge^2\ssT E}$ and $\xd_\textsf{T}^2X$ define a bi-gradation in which the coordinates $(x^\zm, e^a, \dot x^{\zm\zn}, y^{\zm a}, \dot e^{ab})$ are of the bi-degrees
$(0,0)$, $(0,1)$, $(1,0), $(1,1), and $(1,2)$, respectively.

      Moreover, since $X_{\wedge^2\textsf{T} E}$ is projectable on $\bar\wedge^2\sT E$, its projection together with the projection of $\xd_{\textsf{T}}^2X_E$ define the double vector bundle structure on $\bar\wedge^2\sT E$:
    \begin{equation}\label{h:7}
        {\xymatrix@R-1mm @C-1mm{ & \bar\wedge^2\sT E \ar[ld]_*{\bar\zt^{\wedge 2}_{E}} \ar[rd]^*{\bar\wedge^2 {\tau_M}} & \cr
        \quad E \ar[rd]_*{\tau} & & \wedge^2\sT M \ar[ld]^*{\zt^{\wedge2}_M}  \cr & M  & }} .
    \end{equation}
We just forget the coordinates of degree $(1,2)$.

\medskip
Quite similarly, if we start with a graded bundle $G$ of degree 2 with coordinates
$(x^\zm, y^a, z^w)$ of degrees, respectively, $0$, $1$, $2$ with respect to the weight vector field $X_G=y^a\pa_{y^a}+2z^w\pa_{z^w}$, then $\sT G$ is a double graded bundle with respect to the Euler vector field
$$X_{\textsf{T} G}=\dot x^\zm\pa_{\dot x^\zm}+\dot y^a\pa_{\dot y^a}+\dot z^w\pa_{\dot z^w}$$
and the tangent lift
$$\xd_\textsf{T} X_G=y^a\pa_{y^a}+2z^w\pa_{z^w}+\dot y^a\pa_{\dot y^a}+2\dot z^w\pa_{\dot z^w}\,.$$
Thus, coordinates $(x^\zm, y^a, z^w, \dot x^\zm, \dot y^a, \dot z^w)$ are of bi-degrees $(0,0)$, $(0,1)$, $(0,2)$, $(1,0)$, $(1,1)$, and $(1,2)$, respectively.

\begin{example} The coordinates $(x^\zm, {\dot x}^\zn, {\dot x}^{\zk\zl}, y^{\zr\zs}, z^{\zf\zs})$
on $\we^2\sT\sT M$ are, respectively, of bi-degrees  $(0,0)$, $(0,1)$, $(1,0)$, $(1,1)$, and $(1,2)$. The coordinates $(x^\zm, {\dot x}^{\zk\zl}, {x'}^{\zn}, {\dot x'}{}^{\zr\zy})$ of $\sT\we^2\sT M$ are of bi-degrees $(0,0)$, $(0,1)$, $(1,0)$, and $(1,1)$.
Hence, the smooth map $\zk^2_M\colon\we^2\sT\sT M\to\sT\we^2\sT M$ given in coordinates by (\ref{F0.24})
respects bi-degrees, so (\ref{F0.25a}) is a morphism of double graded bundles.
\end{example}

\section{The Lagrangian side of the triple}\label{s:7}
The introduced in Section~5.2 function
    $$ \xd_\textsf{T}^2\zD^2_M \colon  \wedge ^2\sT \wedge ^2 \sT^\* M \times_{\wedge ^2\sT M} \wedge ^2 \sT\sT M \rightarrow \R$$
is a degenerate pairing between fibrations
    $$ \wedge ^2\sT \ezp^{\wedge 2}_M \colon \wedge ^2\sT \wedge ^2\sT^\* M \rightarrow  \wedge ^2\sT M$$
and
    $$ \wedge ^2\sT \ezt_M \colon \wedge ^2\sT \sT M \rightarrow\wedge ^2\sT M.$$
We have also the canonical pairing
    $$ \langle \,,\,\rangle \colon \sT^\* \wedge ^2\sT M \times
    _{\wedge ^2\sT M} \sT \wedge ^2\sT M \rightarrow \R.$$
With this pairings we define a relation
    $$ \eza^2_M \colon \wedge ^2\sT \wedge ^2 \sT^\* M  \rightarrow   \sT^\* \wedge ^2\sT M$$
which is the dual relation to the {\it converse} (called sometimes also: {\it inverse} or {\it transpose}) of $\ezk_M^2$. More precisely,
$p\in \eza_M^2(u)$ if and only if
$$ \forall\ w\in \wedge ^2 \sT\sT M \left[
\wedge ^2\sT \ezt_M (w) = \ezp_{\wedge ^2 \sT M}(p)\ \Rightarrow\  x \langle p, \ezk_M^2(w)\rangle  = \dt^2\zD^2_M (u,w)\right]\,.$$
In local coordinates, with the following representations of $p,w,u$:
\bea\label{F0.27}
        \wedge ^2\sT \sT M\ni w &=& \frac{1}{2} {\dot x}^{\zm\zn} \frac{\partial}{\partial x^\zm}\wedge \frac{\partial }{\partial x^\zn} + y^{\zm \zn}\frac{\partial }{\partial x^\zm}\wedge \frac{\partial }{\partial {\dot x}^\zn} + \frac{1}{2} {z}^{\zm\zn} \frac{\partial }{\partial {\dot x}^\zm}\wedge\frac{\partial }{\partial {\dot x}^\zn}, \nn \\
        \sT^\* \wedge ^2\sT M \ni p &=& p_\zm \xd x^\zm  + \frac{1}{2}f_{\zm\zn} \xd {\dot x}^{\zm\zn} \,, \ \ \  f_{\zm\zn}=-f_{\zn\zm}\,,\\
        \wedge ^2\sT \wedge^2 \sT^\* M\ni u &=& \frac{1}{2} {\dot x}^{\zm\zn} \frac{\partial}{\partial x^\zm}\wedge \frac{\partial }{\partial x^\zn} + \frac{1}{2} y^{\zm}_{\zn\zl}\frac{\partial }{\partial x^\zm}\wedge \frac{\partial }{\partial p_{\zn\zl}} +\frac{1}{8} {\dot p}_{\zm\zn\zl\zk} \frac{\partial }{\partial p_{\zm\zn}}\wedge\frac{\partial }{\partial p_{\zl\zk}},\nn
\eea
we have
        $$\wedge ^2 \sT M \ni \ezt_{\wedge ^2\textsf{T} M}(w) =  \frac{1}{2} {\dot x}^{\zm\zn}\frac{\partial}{\partial x^\zm}\wedge \frac{\partial }{\partial x^\zn}\,,\quad
        \sT M \ni \ezt^{\wedge 2}_M(w)= {\dot x}^\zn \frac{\partial }{\partial x^\zn}\,,
$$
and the equation $\langle p, \ezk_M^2(w)\rangle  = \dt^2\zD^2_M (u,w)$, in view of (\ref{F0.24}), reads
    \begin{equation}\label{F0.28}
        p_\zr {\dot x}^\zr + \frac{1}{2}(y^{\zl\zk} -
        y^{\zk\zl}) f_{\zl\zk} = -\frac{1}{2}p_{\zl\zk} (y^{\zl\zk} -
        y^{\zk\zl}) - y^\eta_{\eta\zr}{\dot x}^\zr .
    \end{equation}
It follows that the relation $\eza^2_M$ is given by
    \begin{equation}\label{F0.29}\begin{split}
        p_\zr&= -y_{\eta\zr}^\eta, \\
        f_{\zl\zk}      &= - p_{\zl\zk}
    \end{split}\end{equation}
and that it can be considered a mapping
    $$ \eza_M^2 \colon \wedge ^2\sT \wedge^2 \sT^\* M\longrightarrow \sT^\* \wedge ^2\sT M,$$
which projects to a mapping
    $$ \bar\eza_M^2 \colon \bar\wedge ^2\sT \wedge^2 \sT^\* M   \longrightarrow \sT^\* \wedge ^2\sT M,$$
which is a morphism of double vector bundles (but not an isomorphism). In other words,
\begin{equation}\label{F0.31a}
        (x^\zm, {\dot x}^{\zn\zs}, p_\zr,f_{\zl\zk})\circ \eza^2_M =
        (x^\zm, {\dot x}^{\zn\zs}, -y_{\eta\zr}^\eta, -p_{\zl\zk})\,,
    \end{equation}
where $(x^\zm, p_{\zl\zk},{\dot x}^{\zn\zs},y_{\theta\zr}^\eta,\dot p_{\gamma\delta\epsilon\zeta})$ are coordinates on  $\we^2\sT\we^2\sT^\ast M$.

\section{The Hamiltonian side of the triple}\label{s:8}
Let $\ezvy_M^2$ be the canonical Liouville 2-form on $\wedge ^2 \sT^\*M$,
    $$\ezvy^2_M = \frac{1}{2} p_{\zm\zn} \xd x^\zm\wedge \xd x^\zn.$$
Its exterior differential
    $$\ezw^2_M = \xd \ezvy_M^2 =   \frac{1}{2} \xd p_{\zm\zn} \wedge\xd x^\zm\wedge \xd x^\zn$$
defines the  canonical multisymplectic structure on $\wedge ^2 \sT^\*
M$. The 3-form $\ezw^2_M$ can be represented by a vector bundle morphism
    \begin{equation}\label{F0.30}\begin{split}
        \ezb^2_M &\colon  \wedge ^2\sT \wedge^2 \sT^\* M \rightarrow \sT^\*
        \wedge ^2 \sT^\* M  \\
        &\colon u \mapsto \xi_u \ezw^2_M.
    \end{split}\end{equation}
In local coordinates,
    \begin{equation}\label{F0.31}
        (x^\zm, p_{\zl\zk}, f_\zr, q^{\zn\zs})\circ \ezb^2_M =
        (x^\zm, p_{\zl\zk}, -y_{\eta\zr}^\eta, {\dot x}^{\zn\zs}),
    \end{equation}
where $f_\zr \xd x^\zr + \frac{1}{2} q^{\zn\zs} \xd p_{\zn\zs} $ is the
representation of elements in $\sT^\*\wedge ^2 \sT^\* M$ and $(x^\zm, p_{\zl\zk},{\dot x}^{\zn\zs},y_{\theta\zr}^\eta,\dot p_{\gamma\delta\epsilon\zeta})$ are coordinates on  $\we^2\sT\we^2\sT^\ast M$.

It follows that $\ezb^2_M$ projects  to a double vector bundle morphism
    $$ \bar\ezb^2_M \colon  \bar\wedge ^2\sT \wedge^2 \sT^\* M \rightarrow \sT^\*\wedge ^2 \sT^\* M  \,.$$
Moreover, like in the case of Tulczyjew triples for field theories \cite{G1,GG2}, we consequently view  $\eza^2_M$ and $\ezb^2_M$ as relations, thus writing $(\eza^2_M)^{-1}$ for the inverse relation of $\eza^2_M$, we get
    $$(x^\zm, p_{\zl\zk}, f_\zr, q^{\zn\zs})\circ \ezb^2_M\circ(\eza^2_M)^{-1} = (x^\zm, -f_{\zl\zk}, p_\zr, {\dot x}^{\zn\zs})\,,$$
i.e.
$$\ezb^2_M\circ (\eza^2_M)^{-1}\colon \sT^\*\wedge ^2 \sT M\to\sT^\*\wedge ^2 \sT^\*
M$$
is the canonical symplectomorphism, a particular case of the general isomorphism $\sT^\*E\simeq\sT^\*E^\*$ which holds for an arbitrary vector bundle $E$  (cf. \cite{KT,KU}). It is generated by the canonical pairing between $\we^2\sT^\* M$ and $\we^2\sT M$, i.e. by the function $\frac12 p_{\zm\zn}\dot x^{\zm\zn}$.  Another consequence of this fact is that the level sets of $\eza^2_M$ and $\ezb^2_M$ coincide.

\section{The Tulczyjew triple}\label{s:9}
With the use of the  mappings $\ezb^2_M$ and $\eza_M^2$, we
can transport canonical structures from $\sT^\*\wedge ^2\sT M$ and
$\sT^\*\wedge ^2\sT^\* M$ to $\wedge ^2\sT \wedge^2 \sT^\* M$. On the other hand,   we can  lift canonical
forms on $\wedge ^2\sT^\* M$ to forms on $\wedge ^2\sT \wedge^2 \sT^\* M $
by the operator $\dt^2$.

For the Liouville 2-form
$$\ezvy^2_M = \frac{1}{2} p_{\zm\zn} \xd x^\zm\wedge \xd x^\zn,   $$
we get
    \begin{equation}\label{F0.32}\begin{split}
        \xd \ezvy^2_M &=  \frac{1}{2} \xd p_{\zm\zn}\wedge  \xd x^\zm\wedge \xd x^\zn, \\
        \xit^2 \ezvy^2_M &= \frac{1}{2} p_{\zm\zn} {\dot x}^{\zm\zn}, \\
        \xit^2 \xd \ezvy^2_M    &=  \frac{1}{2} {\dot x}^{\zm\zn} \xd p_{\zm\zn}  - y^\zs_{\zs\zn} \xd x^\zn,\\
        \xd \xit^2  \ezvy^2_M   &=  \frac{1}{2} {\dot x}^{\zm\zn} \xd p_{\zm\zn}  + p_{\zm\zn} \xd {\dot x}^{\zm\zn},\\
        \xd_\textsf{T}^2 \ezvy^2_M     &=  \frac{1}{2} p_{\zm\zn} \xd {\dot x}^{\zm\zn} + y^\zs_{\zs\zn} \xd x^\zn\,.
    \end{split}\end{equation}
Let $\ezvy_{\wedge ^2\sT M}$ and $\ezvy_{\wedge ^2\sT^\s* M}$ be the canonical Liouville 1-forms on $\sT^\* \wedge ^2\sT M$ and  $\sT^\* \wedge^2\sT^\* M $, respectively. From  (\ref{F0.29}), (\ref{F0.31}), and (\ref{F0.32}) we derive
    \begin{equation}\label{F0.33}\begin{split}
        (\eza^2_M)^\* \ezvy_{\wedge ^2\textsf{T} M}     &= (\eza^2_M)^\*(p_\zm\xd x^\zm + \frac{1}{2} f_{\zm\zn} \xd {\dot x}^{\zm\zn} )  \\
                &= - \frac{1}{2} p_{\zm\zn} \xd {\dot x}^{\zm\zn} - y^\zs_{\zs\zn} \xd x^\zn\\
                &= - \dt^2 \ezvy^2_M
    \end{split}\end{equation}
and
    \begin{equation}\label{F0.34}\begin{split}
        (\ezb^2_M)^\* \ezvy_{\wedge ^2\textsf{T}^\s* M} &= (\ezb^2_M)^\*(f_\zm\xd x^\zm +\frac{1}{2}q^{\zm\zn} \xd p_{\zm\zn} )  \\
                &=  \frac{1}{2} {\dot x}^{\zm\zn} \xd p_{\zm\zn}- y^\zs_{\zs\zn} \xd x^\zn\\
                &=  \xit^2 \xd \ezvy^2_M = \xit^2\ezw^2_M.
    \end{split}\end{equation}
The above formulae imply immediately the following theorem, perfectly corresponding to analogous facts in the analytical mechanics of point-like objects.
\begin{theorem}\label{C0.5}
The mappings $\eza^2_M$ and $\ezb^2_M$ have the following property
    \begin{equation}\label{F0.35}\begin{split}
        (\eza^2_M)^\* \ezw_{\wedge ^2\textsf{T} M}&= \dt^2  \ezw^2_{M}\,, \\
        (\ezb^2_M)^\* \ezw_{\wedge ^2\textsf{T}^\s* M} &= \dt^2  \ezw^2_{M}\,.
    \end{split}\end{equation}
\end{theorem}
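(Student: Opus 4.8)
The plan is to derive both identities in (\ref{F0.35}) directly from the Liouville-form computations (\ref{F0.33}) and (\ref{F0.34}), by applying the exterior derivative $\xd$ and exploiting that $\xd$ commutes with pull-backs together with the graded commutation rule (\ref{dd}).

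First I would record that the symplectic forms on the left-hand sides are, by definition, exact: $\ezw_{\we^2\sT M}=\xd\ezvy_{\we^2\sT M}$ on $\sT^\ast\we^2\sT M$ and $\ezw_{\we^2\sT^\ast M}=\xd\ezvy_{\we^2\sT^\ast M}$ on $\sT^\ast\we^2\sT^\ast M$, where $\ezvy_{\we^2\sT M}$ and $\ezvy_{\we^2\sT^\ast M}$ are the canonical Liouville $1$-forms. Next I would note that, although $\eza^2_M$ and $\ezb^2_M$ are introduced as relations, the passage to the quotient carried out in Sections~\ref{s:7} and \ref{s:8} turns them into the honest maps $\bar\eza^2_M$ and $\bar\ezb^2_M$, so that pull-back along them commutes with $\xd$; equivalently one may simply work with the explicit coordinate presentations (\ref{F0.29}) and (\ref{F0.31}) on $\we^2\sT\we^2\sT^\ast M$.

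On the Lagrangian side I would then compute
\[
(\eza^2_M)^\ast\ezw_{\we^2\sT M}=(\eza^2_M)^\ast\xd\ezvy_{\we^2\sT M}=\xd\,(\eza^2_M)^\ast\ezvy_{\we^2\sT M}=-\xd\,\dt^2\ezvy^2_M,
\]
the last equality being (\ref{F0.33}). Since (\ref{dd}) gives $\xd\,\dt^2=-\dt^2\,\xd$, the right-hand side equals $\dt^2\,\xd\ezvy^2_M=\dt^2\ezw^2_M$, which is the first assertion. On the Hamiltonian side, in exactly the same fashion $(\ezb^2_M)^\ast\ezw_{\we^2\sT^\ast M}=\xd\,(\ezb^2_M)^\ast\ezvy_{\we^2\sT^\ast M}=\xd\,\xit^2\ezw^2_M$ by (\ref{F0.34}); and because $\ezw^2_M=\xd\ezvy^2_M$ is closed, the definition $\dt^2=\xd\,\xit^2-\xit^2\,\xd$ from (\ref{F0.6}) gives $\dt^2\ezw^2_M=\xd\,\xit^2\ezw^2_M-\xit^2\,\xd\ezw^2_M=\xd\,\xit^2\ezw^2_M$, which is the second assertion.

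I do not expect a genuinely hard step here: the whole weight of the statement is already carried by (\ref{F0.33}) and (\ref{F0.34}), which in turn rest on the explicit local formulas (\ref{F0.32}) together with (\ref{F0.29}) and (\ref{F0.31}). The only point that needs attention is sign bookkeeping --- it is precisely the minus sign in (\ref{F0.33}) combined with the sign flip in (\ref{dd}) that makes the two right-hand sides of (\ref{F0.35}) agree --- and, should one prefer to bypass the Liouville identities and argue purely in coordinates, the careful tracking of the contracted term $y^\zs_{\zs\zn}$ through $\eza^2_M$, $\ezb^2_M$ and through $\dt^2\ezw^2_M$. Either way the computation is routine.
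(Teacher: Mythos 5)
Your proof is correct and is essentially the paper's own argument: both rest on the Liouville-form identities (\ref{F0.33}) and (\ref{F0.34}), the graded commutation rule (\ref{dd}), and the closedness of $\ezw^2_M$, with your version merely reading the chain of equalities from the left-hand sides instead of starting from $\dt^2\ezw^2_M$. The sign bookkeeping you flag is handled identically in the paper.
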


\begin{proof}
We have already proved (\ref{dd}) that $\xd$ (super)commutes with  $\dt^2$, i.e.
    $ \xd\dt^2 = -\dt^2\xd$\,.
Hence,
    $$ \dt^2\ezw^2_M = \dt^2\xd \ezvy^2_M = - \xd\dt^2\ezvy^2_M =(\eza^2_M)^\* \ezw_{\wedge ^2\textsf{T} M}$$
and
    $$ \dt^2\ezw^2_M = \xd \xit^2 \ezw^2_M = \xd\,(\ezb^2_M)^\*
    \ezvy_{\wedge ^2\sT^\s* M} = (\ezb^2_M)^\* \ezw_{\wedge ^2\textsf{T}^\s* M}\,.$$
\end{proof}

The second equality is a special case of (\ref{v}).  The form $\dt^2\ezw^2_M$ is a presymplectic 2-form (it is closed) (\cite{LM}). The degeneracy submanifolds of $\dt^2\ezw^2_M$ are just level sets of  $\eza^2_M,\ \ezb^2_M$. Hence, it the pull-back of a presymplectic form  on $\bar\wedge^2 \sT \wedge^2\sT^\* M$ to $\wedge ^2\sT \wedge^2 \sT^\* M $. In this way we have obtained canonical examples of presymplectic manifolds which are not symplectic.

Combining the maps $\ezb^2_M$ and $\eza_M^2$, we get the following {\it Tulczyjew triple}, being an analogue of the diagram (\ref{F3.1}):
    \be\label{TT}{\xymatrix@R-1mm @C-12mm{ &  \sT^\*\wedge^2\sT^\*M  \ar[ldd]_*{} \ar[rd]^*{} & & & \wedge^2 \sT \wedge^2\sT^\* M \ar[rrr]^*{{\za}^2_M}
    \ar[lll]_*{\zb^2_M} \ar[ldd]^*{} \ar[rd]^*{}& & & \sT^\*\wedge^2\sT M \ar[ldd]^*{} \ar[rd]^*{} & \cr
    & & \wedge^2\sT M \ar[ldd]^*{} & & & \wedge^2\sT M  \ar[ldd]^*{} \ar[lll]^*{} \ar[rrr]^*{} & & & \wedge^2\sT M \ar[ldd]^*{}  \cr
    \wedge^2\sT^\* M  \ar[rd]^*{}  & & & \wedge^2\sT^\* M \ar[rrr]^*{} \ar[lll]^*{} \ar[rd]^*{}& & & \wedge^2\sT^\* M
    \ar[rd]^*{} & & \cr
    & M  & & &  M \ar[rrr]^*{} \ar[lll]^*{} & & & M & }}\,.
    \ee
The above diagram consists of double graded bundle morphisms which simultaneously preserve the presymplectic structures.
Its reduction to degree 1 gives a diagram of morphisms of double vector bundles
$${\xymatrix@R-1mm @C-12mm{ &  \sT^\*\wedge^2\sT^\*M  \ar[ldd]_*{} \ar[rd]^*{} & & & \bar\wedge^2 \sT \wedge^2\sT^\* M \ar[rrr]^*{{\bar\za}^2_M}
    \ar[lll]_*{\bar\zb^2_M} \ar[ldd]^*{} \ar[rd]^*{}& & & \sT^\*\wedge^2\sT M \ar[ldd]^*{} \ar[rd]^*{} & \cr
    & & \wedge^2\sT M \ar[ldd]^*{} & & & \wedge^2\sT M  \ar[ldd]^*{} \ar[lll]^*{} \ar[rrr]^*{} & & & \wedge^2\sT M \ar[ldd]^*{}  \cr
    \wedge^2\sT^\* M  \ar[rd]^*{}  & & & \wedge^2\sT^\* M \ar[rrr]^*{} \ar[lll]^*{} \ar[rd]^*{}& & & \wedge^2\sT^\* M
    \ar[rd]^*{} & & \cr
    & M  & & &  M \ar[rrr]^*{} \ar[lll]^*{} & & & M & }}\,.
    $$

\section{Phase dynamics}\label{s:10}
\subsection{Lagrangian dynamics and Euler-Lagrange equations}
The way of obtaining the implicit phase dynamics $D$, as a submanifold of $\wedge^2 \sT \wedge^2\sT^\* M$, from a Lagrangian $L:\wedge^2\sT M\to\R$ is now standard: we take the lagrangian submanifold $\cL(L)=(\xd L)(\we^2\sT M)$ of the cotangent bundle $\sT^\*\we^2\sT M$ and then its inverse image $D=D(L)=(\za^2_M)^{-1}(\cL(L))$ under the map $\za^2_M$:

{$$\hskip-1cm\xymatrix@C-20pt@R-8pt{
{\color{red} \mathcal{D}}\ar@{ (->}[r]& \we^2\sT\we^2\sT^\ast M \ar[rrr]^{\alpha_M^2} \ar[dr] \ar[ddl]
 & & & \sT^\ast\we^2\sT M\ar[dr]\ar[ddl] & \\
 & & \we^2\sT M\ar@{.}[rrr]\ar@{.}[ddl]
 & & & \we^2\sT M \ar@{.}[ddl]\ar@/_1pc/[ul]_{\color{red}\xd L}\ar[dll]_{\color{red}\mathcal{P} L}\\
 \we^2\sT^\ast M\ar@{.}[rrr]\ar@{.}[dr]
 & & & \we^2\sT^\ast M\ar@{.}[dr] & &  \\
 & M\ar@{.}[rrr]& & & M &
}\qquad$$}

\medskip\noindent
The phase dynamics
$$\mathcal{D}=(\alpha_M^2)^{-1}(\xd L(\we^2\sT M)))\,,$$
read in coordinates
$$\mathcal{D}=\left\{(x^\zm, p_{\zl\zk},{\dot x}^{\zn\zs},y_{\theta\zr}^\eta,\dot p_{\gamma\delta\epsilon\zeta}):\;\; y_{\eta\zr}^\eta=-\frac{\partial L}{\partial x^\zr},\quad p_{\zl\zk}=-\frac{\partial L}{\partial \dot{x}^{\zl\zk}}\right\}\,,$$
so the Lagrange (phase) equations are
\beas \frac{\partial L}{\partial x^\zr}&=&-y_{\eta\zr}^\eta\,,\\
\frac{\partial L}{\partial \dot{x}^{\zl\zk}}&=&-p_{\zl\zk}\,.
\eeas
The map
$$\mathcal{P}L:\we^2\sT M\to\we^2\sT^\ast M\,,\quad (x^\zm,{\dot x}^{\zn\zs})\mapsto\left(x^\zm,-\frac{\partial L}{\partial \dot{x}^{\zn\zs}}\right) $$
is the \emph{Legendre map}.

A surface $S:(t,s)\mapsto (x^\zs(t,s))$ in $M$ satisfies the Euler-Lagrange equations if the image by $\xd L$  of its prolongation to $\we^2\sT M$,
$$(t,s)\mapsto \left(x^\zs(t,s)),\dot x^{\zm\zn}=\frac{\pa x^\zm}{\pa t}\frac{\pa x^\zn}{\pa s}-\frac{\pa x^\zm}{\pa s}\frac{\pa x^\zn}{\pa t}\right)\,,$$
is $\za_M^2$-related to the prolongation of the surface $\mathcal{P}L\circ \we^2\sT S$ to the phase space.
In coordinates, the Euler-Lagrange equations read
\bea\label{el1}\dot x^{\zm\zn}&=&\frac{\pa x^\zm}{\pa t}\frac{\pa x^\zn}{\pa s}-\frac{\pa x^\zm}{\pa s}\frac{\pa x^\zn}{\pa t}\,,\\
\frac{\pa L}{\pa x^\zs}&=&\frac{\pa x^\zm}{\pa t}\frac{\pa}{\pa s}\left(\frac{\pa L}{\pa \dot x^{\zm\zs}}(t,s)\right)-\frac{\pa x^\zm}{\pa s}\frac{\pa}{\pa t}\left(\frac{\pa L}{\pa \dot x^{\zm\zs}}(t,s)\right)\,.\label{ee2}
\eea

\begin{remark} Let us note at this point that some signs in our approach are the matter of several conventions. For instance, one can write  $\dt^2$ as the commutator $\xit^2 \xd-\xd\, \xit^2 $, opposite to what we have used. Similarly, one sometimes uses the convention in which the contractions with multivector fields satisfies the condition $i_{X\wedge Y}=i_X\circ i_Y$. Of course, in this case the pairing between $\pa_{x^\zm}\we\pa_{x^\zn}$ and $\xd {x^\zm}\we\xd {x^\zn}$ gives $-1$, so one gets coordinates in which $p_{\zm\zn}$ and $y_{\eta\zr}^\eta$ differ by sign from ours. We get, however, the Lagrange equations of the form formally closer to the standard one:
\beas
\frac{\partial L}{\partial x^\zr}&=&y_{\eta\zr}^\eta\,,\\
\frac{\partial L}{\partial \dot{x}^{\zl\zk}}&=&p_{\zl\zk}\,.
\eeas
\end{remark}

\subsection{Hamilton equations}
Similarly, for a Hamiltonian $H:\wedge^2\sT^\* M\to\R$, we take the inverse image $D=D(H)=(\zb^2_M)^{-1}(\cL(H))$ of the lagrangian submanifold $\cL(H)=\xd H(\we^2\sT^\* M)$ of the cotangent bundle $\sT^\*\we^2\sT^\* M$:
{$$\hskip-1.2cm\xymatrix@C-25pt@R-12pt{
 & \sT^\ast\we^2\sT^\ast M  \ar[dr] \ar[ddl]
 & & & \we^2\sT\we^2\sT^\ast M\ar[dr]\ar[ddl] \ar[lll]_{\beta_M^2}&
 {\color{red} \mathcal{D}}\ar@{ (->}[l] \\
 & & \we^2\sT M\ar@{.}[rrr]\ar@{.}[ddl]
 & & & \we^2\sT M \ar@{.}[ddl]\\
 \we^2\sT^\ast M\ar@{.}[rrr]\ar@{.}[dr] \ar@/^1pc/[uur]^{\color{red}\xd H}
 & & & \we^2\sT^\ast M\ar@{.}[dr] & &  \\
 & M\ar@{.}[rrr]& & & M &
}\qquad$$}
The phase dynamics
$$\mathcal{D}=(\beta_M^2)^{-1}(\xd H(\we^2\sT^\ast M))$$
read in coordinates
$$\mathcal{D}=\left\{(x^\zm, p_{\zl\zk},{\dot x}^{\zn\zs},y_{\theta\zr}^\eta,\dot p_{\gamma\delta\epsilon\zeta}):\;\; y_{\eta\zr}^\eta=-\frac{\partial H}{\partial x^\zr},\quad {\dot x}^{\zn\zs}=\frac{\partial H}{\partial p_{\zn\zs}}\right\}\,,$$
so the Hamilton equations are
\beas\frac{\pa H}{\pa p_{\zm\zn}}&=&\frac{\pa x^\zm}{\pa t}\frac{\pa x^\zn}{\pa s}-\frac{\pa x^\zm}{\pa s}\frac{\pa x^\zn}{\pa t}\,,\\
-\frac{\pa H}{\pa x^\zs}&=&\frac{\pa x^\zm}{\pa t}\frac{\pa p_{\zm\zs}}{\pa s}-\frac{\pa x^\zm}{\pa s}\frac{\pa p_{\zm\zs}}{\pa t}\,.
\eeas
Note that the integrable part of the dynamics $D$ is contained in the set of decomposable (simple) bivectors.

This can be extended to the framework of Morse families replacing the single Hamiltonian. Let $p\colon B \rightarrow N$ be a submersion of a differentiable manifold $B$ onto a differentiable manifold $N$. Recall that a differentiable function $H\colon B\rightarrow \mathbb{R}$ is called a \textit{Morse family} if the image $\cL(H)=\xd H(B)\subset \sT^\*B$ of the differential of $H$ is transversal to the conormal bundle $V=(\ker \sT p)^0\subset \sT^\*B$. Note that in this situation there is a canonical vector bundle morphism $p^*:V\to\sT^\*N$ which maps $V\cap\xd H(B)$ onto a lagrangian submanifold $D(H)$ of $(\sT^\* M,\zw_N)$ which is said {\it to be generated by the Morse family $H$}.

\section{An example}\label{s:11}
In the dynamics of strings, the manifold of infinitesimal
configurations  is $\wedge ^2 \sT M$, where $M$ is the space time with the
Lorentz metric $g$. This metric induces a scalar product $h$ in fibers of
$\wedge ^2 \sT M$: for
    $$ w=\frac{1}{2} {\dot x}^{\zm\zn}\frac{\partial }{\partial
    x^\zm}\wedge \frac{\partial }{\partial x^\zn},\ \
    u=\frac{1}{2} {\dot x'}{}^{\zm\zn}\frac{\partial }{\partial
    x^\zm}\wedge \frac{\partial }{\partial x^\zn}
    $$
we have
    \begin{equation}\label{F1}
        (u|w) = h_{\zm\zn\zk\zl} {\dot x}^{\zm\zn}{\dot x'}{}^{\zk\zl},
    \end{equation}
where
    $$h_{\zm\zn\zk\zl} =\frac{1}{2}\left( g_{\zm\zk} g_{\zn\zl} - g_{\zm\zl}g_{\zn\zk}\right) .$$

The Lagrangian is a function of the volume with respect to this metric, the so called {\it Nambu-Goto Lagrangian} \cite[Chapter 2.2]{LT} (cf. also \cite[Example 4.2]{S}),
    \begin{equation}\label{F2}
        L(w) =\sqrt{(w|w)} = \sqrt{h_{\zm\zn\zk\zl} {\dot x}^{\zm\zn}{\dot x}{}^{\zk\zl}}\,,
    \end{equation}
which is defined on the open submanifold of positive bivectors.

The dynamics
    $$ D \subset \wedge ^2\sT \wedge^2 \sT^\* M$$
is the inverse image by $\eza^2_M$ of the image of $\xd L$ and it is
described by equations
    \begin{equation}\label{F3}\begin{split}
        y^\za_{\za\zn}&= - \frac{1}{2\zr} \frac{\partial
        h_{\zm\zk\zl\zs}}{\partial x^\zn} {\dot x}^{\zm\zk} {\dot x}^{\zl\zs},   \\
                       p_{\zm\zn} &= - \frac1\zr h_{\zm\zn\zl\zk}{\dot x}^{\zl\zk}\,,                      \end{split}\end{equation}
where
$$\zr = \sqrt{h_{\zm\zn\zl\zk}{\dot x}^{\zm\zn}{\dot x}^{\zl\zk}}$$
is the pull-back of our Lagrangian.
The dynamics $D$ is also the inverse image by $\ezb^2_M$ of the lagrangian submanifold in $\sT^\*\wedge^2\sT^\* M$, generated  by the Morse family
    \begin{equation}\label{F4}\begin{split}
        H&\colon \wedge^2\sT^\* M\times \R_+ \rightarrow \R \\
        &\colon (p,r)\mapsto r(\sqrt{(p|p)} -1)
    \end{split}\end{equation}
In the case of minimal surface, i.e. {\it the Plateau problem}, (see e.g. \cite{E}), we replace the Lorentz metric by a positively defined one.

In particular, if $M=\R^3=\{(x^1=x,x^2=y,x^3=z)\}$ with the Euclidean metric, the Lagrangian reads
$$L(x^\zm,\dot x^{\zk\zl})=\sqrt{\sum_{\zk,\zl=1}^3\left(\dot x^{\zk\zl}\right)^2}
$$
and we obtain the Lagrange (phase) equations
\beas
        & &y^\za_{\za\zn}= 0\,,   \\
        & & p_{\zm\zn} = -\frac1\zr\zd_{\zk\zm}\,\zd_{\zl\zn}\,{\dot x}^{\zk\zl}\,.
\eeas
The Euler-Lagrange equations (\ref{el1})-(\ref{ee2}), applied for surfaces being graphs $(x,y)\mapsto (x,y,z(x,y))$, in this case read
\be\label{e0}\dot x^{12}=1\,,\quad \dot x^{13}=\frac{\pa z}{\pa y}=z_y\,,\quad \dot x^{23}=-\frac{\pa z}{\pa x}=-z_x\,,\ee
$$(E_k):\quad\frac{\pa}{\pa y}\left(\frac{\dot x^{1k}}{\zr}\right)+z_x\frac{\pa}{\pa y}\left(\frac{\dot x^{3k}}{\zr}\right)-\frac{\pa}{\pa x}\left(\frac{\dot x^{2k}}{\zr}\right)-z_y\frac{\pa}{\pa x}\left(\frac{\dot x^{3k}}{\zr}\right)=0\,,\quad k=1,2,3\,.$$
In view of (\ref{e0}), the equation $(E_3)$ can be rewritten as
\be\label{e3}
\frac{\pa}{\pa y}\left(\frac{z_y}{\zr}\right)+\frac{\pa}{\pa x}\left(\frac{z_x}{\zr}\right)=0\,.
\ee
But (\ref{e0}) and (\ref{e3}) imply the rest. Indeed, $(E_1)$ can be rewritten as
$$-z_x\frac{\pa}{\pa y}\left(\frac{z_y}{\zr}\right)+\frac{\pa}{\pa x}\left(\frac{1}{\zr}\right)+
z_y\frac{\pa}{\pa x}\left(\frac{z_y}{\zr}\right)=0\,.
$$
With the presence of (\ref{e3}), this is equivalent to
$$z_x\frac{\pa}{\pa x}\left(\frac{z_x}{\zr}\right)+\frac{\pa}{\pa x}\left(\frac{1}{\zr}\right)+
z_y\frac{\pa}{\pa x}\left(\frac{z_y}{\zr}\right)=0
$$
and further to
$$\frac{\pa}{\pa x}\left(\frac{z_x^2+1+z_y^2}{\zr}\right)-\frac{z_xz_{xx}+z_yz_{xy}}{\zr}=0\,.
$$
But the latter is tautological, as the first summand is
$$\frac{\pa \zr}{\pa x}=\frac{\pa}{\pa x}\left(\sqrt{z_x^2+1+z_y^2}\right)=\frac{z_xz_{xx}+z_yz_{xy}}{\zr}\,.
$$
The equation $(E_2)$ follows in a similar way.
Thus, we have finished with (\ref{e3}) which is the well-known equation for minimal surfaces, known already to Lagrange:
$$\frac{\pa}{\pa x}\left(\frac{z_x}{\sqrt{1+z_x^2+z_y^2}}\right)+\frac{\pa}{\pa y}\left(\frac{z_y}{\sqrt{1+z_x^2+z_y^2}}\right)=0\,.
$$
In another form:
$$(1+z_x^2)z_{yy}-2z_xz_yz_{xy}+(1+z_y^2)z_{xx}=0\,.
$$

\section{Generalization}\label{s:12}
The diagram (\ref{TT}) has a straightforward generalization for all integer $n\ge 1$ replacing 2:
\be\label{TT1}{\xymatrix@R-1mm @C-12mm{ &  \sT^\*\wedge^n\sT^\*M  \ar[ldd]_*{} \ar[rd]^*{} & & & \wedge^n \sT \wedge^n\sT^\* M \ar[rrr]^*{{\za}^n_M}
    \ar[lll]_*{\zb^n_M} \ar[ldd]^*{} \ar[rd]^*{}& & & \sT^\*\wedge^n\sT M \ar[ldd]^*{} \ar[rd]^*{} & \cr
    & & \wedge^n\sT M \ar[ldd]^*{} & & & \wedge^n\sT M  \ar[ldd]^*{} \ar[lll]^*{} \ar[rrr]^*{} & & & \wedge^n\sT M \ar[ldd]^*{}  \cr
    \wedge^n\sT^\* M  \ar[rd]^*{}  & & & \wedge^n\sT^\* M \ar[rrr]^*{} \ar[lll]^*{} \ar[rd]^*{}& & & \wedge^n\sT^\* M
    \ar[rd]^*{} & & \cr
    & M  & & &  M \ar[rrr]^*{} \ar[lll]^*{} & & & M & }}\,.
    \ee
The map $\ezb^n_M$ comes from the canonical multisymplectic $(n+1)$-form $\ezw^n_M$ on $\wedge ^n \sT^\* M$, being the differential of the canonical Liouville $n$-form $\ezvy_M^n$:
\be\begin{split}
        \ezb^n_M &\colon  \wedge ^n\sT \wedge^n \sT^\* M \rightarrow \sT^\*
        \wedge ^n \sT^\* M  \\
        &\colon u \mapsto \xi_u \ezw^n_M.
    \end{split}
    \ee
In the standard local coordinates,
    $$\ezvy^n_M = \frac{1}{n!} p_{\zm_1\cdots\zm_n} \xd x^{\zm_1}\wedge\cdots\wedge \xd x^{\zm_n}\,,$$
    and
    $$\ezw^n_M = \xd \ezvy_M^n =   \frac{1}{n!} \xd  p_{\zm_1\cdots\zm_n}\we \xd x^{\zm_1}\wedge\cdots\wedge \xd x^{\zm_n}\,.$$
The map ${\za^n_M}$ is just the composition of ${\zb^n_M}$ with the canonical isomorphism of double vector bundles $\sT^\*\wedge ^n \sT^\* M$ and $\sT^\*\wedge ^n \sT M$, but it can be obtained as the dual of the converse of the generalized `kappa' \cite{GU}:

\begin{equation}\label{kappa}
        {\xymatrix @C-2mm{   & \wedge ^n\sT \sT M  \ar[ldd]_*{\wedge^n\sT\ezt_M} \ar[rd]^*{\zt_{\sT M}^{\wedge n}}
        \ar[rrr]^*{\zk_M^n} & & & \sT\wedge ^n \sT M \ar[ldd]_(.3)*{\zt_{\wedge ^n\sT M }} \ar[rd]^*{\sT\ezt_M^{\wedge n}} & \cr
        & & \sT M \ar[ldd]_(.3)*{\zt_M} \ar[rrr]^(.7)*{\text{id}} & & & \sT M \ar[ldd]_*{\zt_M}\cr
        \wedge ^n \sT M  \ar[rd]^*{\zt_M^{\wedge n}} \ar[rrr]^(.7)*{\text{id}} & & & \wedge ^n\sT M \ar[rd]^*{\zt_M^{n}} & &  \\
        & M \ar[rrr]^*{\text{id}}  & & & M &}} \,.
\end{equation}

\medskip\noindent
The duality between $\wedge^n \sT \wedge^n\sT^\* M$ and $\wedge ^n\sT \sT M$ is represented by the function $\dt^n\zD^n_M$, where
 \begin{equation}\begin{split}
        \zD^n_M &\colon \sT M\times _M \wedge ^n\sT^\* M \rightarrow \wedge^{n-1} \sT^\* M \\
        & \colon   (v,\za)\mapsto \ix_ v\za\,,
    \end{split}\end{equation}
$\dt^n=[\xd\,,\xit^n]$, and
\beas
       & \xit^n\colon \zF^p(\ezp_M) \rightarrow  \zF^{p-n}(\ezp_{\wedge ^n\sT M})\,,\\
        & \xit^n\zf(v_1, v_2, v_3,\dots, v_{p-n}) = \zf(u, \sT \ezt^{\wedge n}_M(v_1), \dots,\sT \ezt^{\wedge n}_M(v_{p-n}))\,,
        \eeas
for $v_i\in \sT_u\wedge ^n\sT M$, $u\in \wedge ^n\sT M$.

The generation of dynamics from a Lagrangian or Hamiltonian is standard and the corresponding dynamics contain, as a particular example, minimal submanifolds (of dimension $n$) of $M$ (cf. \cite{X}).


\medskip
Received xxxx 20xx; revised xxxx 20xx.
\medskip

\end{document}